\numberwithin{equation}{section}
\begin{document}
	\theoremstyle{plain}
	\newtheorem{thm}{Theorem}[section]
	\newtheorem{lem}[thm]{Lemma}
	\newtheorem{cor}[thm]{Corollary}
	\newtheorem{cor*}[thm]{Corollary*}
	\newtheorem{prop}[thm]{Proposition}
	\newtheorem{prop*}[thm]{Proposition*}
	\newtheorem{conj}[thm]{Conjecture}	%%%%%%%%%%%%%%%%%%%% Text roman %%%%%%%%%%%%%%%%%%%%%%%%%%%%%
	\theoremstyle{definition}
	\newtheorem{construction}{Construction}
	\newtheorem{notations}[thm]{Notations}
	\newtheorem{question}[thm]{Question}
	\newtheorem{prob}[thm]{Problem}
	\newtheorem{rmk}[thm]{Remark}
	\newtheorem{remarks}[thm]{Remarks}
	\newtheorem{defn}[thm]{Definition}
	\newtheorem{claim}[thm]{Claim}
	\newtheorem{assumption}[thm]{Assumption}
	\newtheorem{assumptions}[thm]{Assumptions}
	\newtheorem{properties}[thm]{Properties}
	\newtheorem{exmp}[thm]{Example}
	\newtheorem{comments}[thm]{Comments}
	\newtheorem{blank}[thm]{}
	\newtheorem{observation}[thm]{Observation}
	\newtheorem{defn-thm}[thm]{Definition-Theorem}
	\newtheorem*{Setting}{Setting}

	\newcommand{\sA}{\mathscr{A}}	\newcommand{\sB}{\mathscr{B}}
	\newcommand{\sC}{\mathscr{C}}
	\newcommand{\sD}{\mathscr{D}}
	\newcommand{\sE}{\mathscr{E}}
	\newcommand{\sF}{\mathscr{F}}
	\newcommand{\sG}{\mathscr{G}}
	\newcommand{\sH}{\mathscr{H}}
	\newcommand{\sI}{\mathscr{I}}
	\newcommand{\sJ}{\mathscr{J}}
	\newcommand{\sK}{\mathscr{K}}
	\newcommand{\sL}{\mathscr{L}}
	\newcommand{\sM}{\mathscr{M}}
	\newcommand{\sN}{\mathscr{N}}
	\newcommand{\sO}{\mathscr{O}}
	\newcommand{\sP}{\mathscr{P}}
	\newcommand{\sQ}{\mathscr{Q}}
	\newcommand{\sR}{\mathscr{R}}
	\newcommand{\sS}{\mathscr{S}}
	\newcommand{\sT}{\mathscr{T}}
	\newcommand{\sU}{\mathscr{U}}
	\newcommand{\sV}{\mathscr{V}}
	\newcommand{\sW}{\mathscr{W}}
	\newcommand{\sX}{\mathscr{X}}
	\newcommand{\sY}{\mathscr{Y}}	\newcommand{\sZ}{\mathscr{Z}}
	\newcommand{\bZ}{\mathbb{Z}}
	\newcommand{\bN}{\mathbb{N}}
	\newcommand{\bQ}{\mathbb{Q}}
	\newcommand{\bC}{\mathbb{C}}
	\newcommand{\bR}{\mathbb{R}}
	\newcommand{\bH}{\mathbb{H}}
	\newcommand{\bD}{\mathbb{D}}
	\newcommand{\bE}{\mathbb{E}}
	\newcommand{\bV}{\mathbb{V}}
	\newcommand{\cV}{\mathcal{V}}
	\newcommand{\cF}{\mathcal{F}}
	\newcommand{\bfM}{\mathbf{M}}
	\newcommand{\bfN}{\mathbf{N}}
	\newcommand{\bfX}{\mathbf{X}}
	\newcommand{\bfY}{\mathbf{Y}}
	\newcommand{\spec}{\textrm{Spec}}
	\newcommand{\dbar}{\bar{\partial}}
	\newcommand{\ddbar}{\partial\bar{\partial}}
	\newcommand{\redref}{{\color{red}ref}}
	%%%%%%%%%%%%%%%%%%%%%%%%%%%%%%%%%%%%%%%%%%%%%%%%%%%%%%%%%%%%%%
	
	\title[Koll\'ar's Package for Twisted Saito's S-sheaves] {Koll\'ar's Package for Twisted Saito's S-sheaves}
	
	\author[Junchao Shentu]{Junchao Shentu}
	\email{stjc@ustc.edu.cn}
	\address{School of Mathematical Sciences, University of Science and Technology of China, Hefei, 230026, China}
	\author[Chen Zhao]{Chen Zhao}
	\email{czhao@ustc.edu.cn}
	\address{School of Mathematical Sciences, University of Science and Technology of China, Hefei, 230026, China}
	%%%%%%%%%%%%%%%%%%%%%%%%%%%%%%%%%%%%%%%%%%%%%%%%%%%%%%%%%%%%%%%%%%%%%%%%%%%%%%%%%%%%%%%%%%%%%%%%%                                         Abstract                                          %
	%%%%%%%%%%%%%%%%%%%%%%%%%%%%%%%%%%%%%%%%%%%%%%%%%%%%%%%%%%%%%%%%%%%%%%%%%%%%%%%%%%%%%%%%%%%%%%%%
	\begin{abstract}
		We generalize Koll\'ar's conjecture (including torsion freeness, injectivity theorem, vanishing theorem and decomposition theorem) to Saito's $S$-sheaves twisted by a $\bQ$-divisor. This gives a uniform treatment for various kinds of Koll\'ar's package in different topics in complex geometry. As a consequence we prove  Koll\'ar's package of pluricanonical bundles twisted by a certain multiplier ideal sheaf. The method of the present paper is $L^2$-theoretic.
	\end{abstract}
	\maketitle
	%\tableofcontents
	
	\section{Introduction}
	Let $f:X\rightarrow Y$ be a proper morphism between complex spaces\footnote{All the complex spaces are assumed to be separated, reduced, paracompact, countable at infinity and of pure dimension throughout the present paper. We would like to point out that the complex spaces are allowed to be reducible.} such that $Y$ is irreducible and each irreducible component of $X$ is mapped onto $Y$. We say that a coherent sheaf $\sF$ on $X$ satisfies {\bf Koll\'ar's package} with respect to $f$ if the following statements hold.
	\begin{description}
		\item[Torsion Freeness] $R^qf_\ast (\sF)$ is torsion free for every $q\geq0$ and vanishes if $q>\dim X-\dim Y$.
		\item[Injectivity Theorem] If $L$ is a semipositive holomorphic line bundle on $X$ so that $L^{\otimes l}$ admits a nonzero holomorphic global section $s$ for some $l>0$, then the canonical morphism
		$$R^qf_\ast(\times s):R^qf_\ast(\sF\otimes L^{\otimes k})\to R^qf_\ast(\sF\otimes L^{\otimes (k+l)})$$
		is injective for every $q\geq0$ and every $k\geq1$.
		\item[Vanishing Theorem] If $Y$ is a projective algebraic variety and $L$ is an ample line bundle on $Y$, then
		$$H^q(Y,R^pf_\ast(\sF)\otimes L)=0,\quad \forall q>0,\forall p\geq0.$$
		\item[Decomposition Theorem] Assume moreover that $X$ is a compact K\"ahler space. Then $Rf_\ast (\sF)$ splits in the derived category $D(Y)$ of $\sO_Y$-modules, i.e. 
		$$Rf_\ast(\sF)\simeq \bigoplus_{q} R^qf_\ast(\sF)[-q]\in D(Y).$$
		As a consequence, the spectral sequence
		$$E^{pq}_2:H^p(Y,R^qf_\ast(\sF))\Rightarrow H^{p+q}(X,\sF)$$
		degenerates at the $E_2$ page.
	\end{description}
	These statements date back to J. Koll\'ar \cite{Kollar1986_1,Kollar1986_2}, who shows that the dualizing sheaf $\omega_X$ satisfies Koll\'ar's package when $X$ is smooth projective and $Y$ is projective. Aiming at various geometric applications, Koll\'ar's results have been further generalized in two directions.
	
	The first direction is to show Koll\'ar's package for the dualizing sheaf twisted by a $\bQ$-divisor, or a multiplier ideal sheaf more generally. These kinds of (ad-hoc) Koll\'ar's package have applications in E. Viehweg's works on the quasi-projective moduli of polarized manifolds \cite{Viehweg1995,Viehweg2010}, O. Fujino's project of minimal model program for log-canonical varieties \cite{Fujino2017} and Koll\'ar-Kov\'acs' splitting criterion for du Bois singularities \cite{Kollar2010}, etc. Besides, K. Takegoshi \cite{Takegoshi1995} proves Koll\'ar's package for the dualizing sheaf twisted by a Nakano semipositive vector bundle. The injectivity theorem for the dualizing sheaf twisted by a general multiplier ideal sheaf has been investigated by S. Matsumura \cite{Matsumura2018} and Fujino-Matsumura \cite{Matsumura2021}. However the complete proof of the Koll\'ar's package (listed as above) for the dualizing sheaf twisted by a multiplier ideal sheaf is still missing.
	
	The other direction is to generalize Koll\'ar's package to certain Hodge theoretic objects such as variations of Hodge structure and Hodge modules. Assume that $f:X\to Y$ is a morphism between projective varieties. Let $\bV$ be an $\bR$-polarized variation of Hodge structure on some dense Zariski open subset $X^o\subset X_{\rm reg}$ of the regular loci $X_{\rm reg}$. M. Saito \cite{MSaito1991} constructs a coherent sheaf $S_X(\bV)$ and shows that $S_X(\bV)$ satisfies Koll\'ar's package with respect to $f$. When $\bV$ is the trivial variation of Hodge structure, $S_X(\bV)\simeq \omega_X$. Saito's work gives an affirmative answer to Koll\'ar's conjecture \cite[\S 4]{Kollar1986_2}. Together with other deep results of Hodge modules, Koll\'ar's package for $S_X(\bV)$ plays important roles in the series works of Popa-Schnell  \cite{PS2013,PS2014,PS2017}. Recently the authors of the present paper \cite{SC2021_kollar} generalize Saito's result to the context of non-abelian Hodge theory.
	
	The purpose of the present article is to show that Koll\'ar's package holds for Saito's $S$-sheaves twisted by a multiplier ideal sheaf associated with a $\bQ$-divisor. This gives a uniform and systematic treatment for various Koll\'ar's package twisted by a $\bQ$-divisor. This package contains new results even in the case of the dualizing sheaf twisted by a multiplier ideal sheaf. The main tool is the abstract Koll\'ar's package established in \cite{SC2021_kollar}.
	\subsection{Main result}
	Before stating the main results let us recall Saito's construction of $S_X(\bV)$, with two generalizations:
	\begin{enumerate}
		\item We generalize Saito's construction to complex variations of Hodge structure. In particular we do not make assumptions on the local monodromy. This is interesting in the view of nonabelian Hodge theory because complex variations of Hodge structure are precisely the $\bC^\ast$ fixed points on the moduli space of certain tame harmonic bundles (\cite[Theorem 8]{Simpson1990}, \cite[Proposition 1.9]{Mochizuki2006}).
		\item We generalize Saito's construction with respect to the Deligne-Manin prolongations of the variation of Hodge structure with indices other than $(-1,0]$. This is a combination of Saito's $S_X(\bV)$ with the multiplier ideal sheaf associated to a boundary $\bQ$-divisor.
	\end{enumerate}
	Let $X$ be a complex space. Let $\bV=(\cV,\nabla,\{\cV^{p,q}\},Q)$ be a polarized complex variation of Hodge structure (Definition \ref{defn_CVHS}) on some dense regular Zariski open subset $X^o$ of $X$. Let $A$ be an effective $\bQ$-Cartier divisor on $X$. We define a coherent sheaf $S_X(\bV,-A)$ as follows. 
	\begin{description}
		\item[Log smooth case] Assume that $X$ is smooth, $E:=X\backslash X^o$ is a simple normal crossing divisor and ${\rm supp}(A)\subset E$. Denote by $E=\cup_{i=1}^lE_i$ the irreducible decomposition and denote $A=\sum_{i=1}^l{r_i}E_i$ with $r_1,\dots,r_l\in\bQ_{\geq 0}$. Let $\bm{r}=(r_1,\dots,r_l)$. Let $\cV_{>\bm{r}-1}$ be the Deligne-Manin prolongation with indices $>\bm{r}-1$. It is a locally free $\sO_X$-module extending $\cV$ such that $\nabla$ induces a connection with logarithmic singularities 
		$$\nabla:\cV_{>\bm{r}-1}\to\cV_{>\bm{r}-1}\otimes\Omega_X(\log E)$$ where the real part of the eigenvalues of the residue of $\nabla$ along $E_i$ belongs to $(r_i-1,r_i]$ for each $i$. Let $j:X^o\to X$ be the open immersion. Denote $S(\bV):=\cV^{p_{\rm max},k-p_{\rm max}}$ where $p_{\rm max}=\max\{p|\cV^{p,k-p}\neq0\}$. 
		Define $$S_{X}(\bV,-A):=\omega_X\otimes\left(j_\ast S(\bV)\cap\cV_{>\bm{r}-1}\right).$$ 
		\item[General case] Let $\pi:\widetilde{X}\to X$ be a proper bimeromorphic morphism such that $\pi^o:=\pi|_{\pi^{-1}(X^o\backslash{\rm supp}(A))}:\pi^{-1}(X^o\backslash{\rm supp}(A))\to X^o\backslash{\rm supp}(A)$ is biholomorphic and the exceptional loci $E:=\pi^{-1}((X\backslash X^o)\cup {\rm supp}(A)))$ is a simple normal crossing divisor. Then
		\begin{align}
			S_X(\bV,-A)\simeq\pi_\ast\left(S_{\widetilde{X}}(\pi^{o\ast}\bV,-\pi^\ast A)\right).
		\end{align}
	\end{description}
	When $A=\emptyset$, $S_X(\bV,\emptyset)$ is canonically isomorphic to Saito's $S_X(\bV)$ (see \cite{MSaito1991}, at least when $\bV$ is $\bR$-polarizable). The main result of the present article is
	\begin{thm}\label{thm_main_CVHS}
		\begin{enumerate}
			\item $S_{X}(\bV,-A)$ is a torsion free coherent sheaf on $X$ which is independent of the choice of the desingularization $\pi:\widetilde{X}\to X$.
			\item Let $f:X\to Y$ be a locally K\"ahler proper morphism between complex spaces such that $Y$ is irreducible and each irreducible component of $X$ is mapped onto $Y$. Let $L$ be a line bundle on $X$ such that some multiple $mL=B+D$ where $B$ is a semipositive line bundle and $D$ is an effective Cartier divisor on $X$. Let $F$ be an arbitrary Nakano-semipositive vector bundle on $X$. Then $S_{X}(\bV,-\frac{1}{m}D)\otimes F\otimes L$ satisfies Koll\'ar's package with respect to $f$.
		\end{enumerate}
	\end{thm}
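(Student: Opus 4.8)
The plan is to deduce both assertions from the abstract Kollár's package of \cite{SC2021_kollar}, whose hypotheses are $L^2$-analytic in nature. The guiding principle is that, on a log smooth model, $S_X(\bV,-A)$ is the sheaf of $L^2$ holomorphic top-degree forms valued in the Hodge bundle $S(\bV)$, measured in the Hodge metric, and that the Deligne--Manin index $>\bm{r}-1$ encodes exactly the $L^2$ growth condition along $E$ dictated by $A$. Granting this identification, part (1) is essentially formal: the intersection $j_\ast S(\bV)\cap\cV_{>\bm{r}-1}$ is a subsheaf of the locally free sheaf $\cV_{>\bm{r}-1}$, hence torsion free, and tensoring with $\omega_X$ preserves torsion freeness. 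Independence of the desingularization follows by dominating any two choices of $\pi$ by a common third model and invoking the birational invariance of the $L^2$ condition: the norm estimates for polarized complex variations of Hodge structure, in the tame harmonic bundle framework of \cite{Mochizuki2006}, show that the space of $L^2$ sections is unchanged under further log blow-ups.

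For part (2) I would first reduce to the log smooth situation. Choosing $\pi:\widetilde{X}\to X$ as in the general case and applying the projection formula together with $S_X(\bV,-A)\simeq\pi_\ast S_{\widetilde{X}}(\pi^{o\ast}\bV,-\pi^\ast A)$, the statement for $f$ on $X$ is converted, via the composition $f\circ\pi$ and a Leray spectral sequence argument, into the corresponding statement for $f\circ\pi$ on the smooth model $\widetilde{X}$. Thus one may assume $X$ smooth with $E=X\setminus X^o$ simple normal crossing. The heart of the matter is then to realize $S_{\widetilde{X}}(\pi^{o\ast}\bV,-\tfrac1m\pi^\ast D)\otimes F\otimes L$ as an $L^2$-Dolbeault sheaf to which the abstract package applies. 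Here the twist by $L$ is handled by equipping $L$ with the singular Hermitian metric whose $m$-th power is $h_B\otimes|s_D|^{-2}$, where $h_B$ is a smooth semipositive metric on $B$ and $s_D$ is the canonical section of $\sO_X(D)$; its curvature equals $\tfrac1m\Theta_{h_B}\geq0$ away from $D$, while the singularity along $D$ contributes precisely the weight shift that turns the index $>\bm{r}-1$ into the index associated to $-\tfrac1m D$, matching the multiplier ideal $\sI(\tfrac1m D)$.

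With this setup the Nakano-semipositive factor $F$ and the semipositive curvature of $(B,h_B)$ supply the curvature positivity required by the abstract theorem, while the polarization $Q$ furnishes the Hodge--Kodaira structure and the locally Kähler hypothesis on $f$ provides the fibrewise Kähler metrics needed to run the relative $L^2$ harmonic theory. One then checks that the hypotheses of the abstract Kollár's package of \cite{SC2021_kollar}---a polarized $L^2$ Hodge sheaf on a locally Kähler proper morphism, twisted by a semipositive datum---are met, and reads off torsion freeness, the injectivity theorem, the vanishing theorem and the decomposition theorem simultaneously.

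The main obstacle is the local $L^2$ identification at the boundary divisor: one must prove that the algebraically defined sheaf $S_{\widetilde{X}}(\pi^{o\ast}\bV,-\tfrac1m\pi^\ast D)\otimes L$ coincides, stalk by stalk near $E$, with the sheaf of $L^2$ forms for the combined metric $h_{\mathrm{Hodge}}\otimes h_L$. This demands sharp two-sided norm estimates for the Hodge metric of a polarized \emph{complex} variation of Hodge structure with arbitrary, not necessarily unipotent, local monodromy---supplied by the asymptotic analysis of tame harmonic bundles---together with careful bookkeeping showing that the fractional weight $\tfrac1m$ coming from $|s_D|^{-2/m}$ adds to the Hodge-theoretic weights to reproduce exactly the prolongation index $>\bm{r}-1$. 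Reconciling these two independent sources of boundary weights, and verifying that the resulting singular metric still satisfies the curvature and completeness conditions demanded by the abstract package, is the technical crux of the argument.
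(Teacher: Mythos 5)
Your overall strategy coincides with the paper's: identify $S_X(\bV,-\tfrac{1}{m}D)\otimes L$ with the $L^2$-sheaf $S_X(S(\bV)\otimes L,h_Qh_L)$ for the Hodge metric times the singular metric $(h_Bh_D)^{1/m}$, check that $(S(\bV),h_Q)$ is Nakano semipositive (via $\overline{\theta}(S(\bV))=0$ and the Griffiths curvature formula) and tame, and feed everything into the abstract Koll\'ar package of \cite{SC2021_kollar}. One routing remark: the Leray reduction to $f\circ\pi$ is an unnecessary detour. The abstract package is stated for tame hermitian bundles on a dense Zariski open subset of a possibly singular $X$, so once the $L^2$ identification is established (and that identification is reduced to the log smooth case purely by the functoriality $\pi_\ast S_{X'}(\pi^\ast E,\pi^\ast h)=S_X(E,h)$ of Lemma \ref{prop_L2ext_birational}, not by Leray) one applies the package directly to $f$. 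If you insist on descending from $\widetilde{X}$, you must first show $R^q\pi_\ast=0$ for $q>0$, which is itself an instance of the torsion-freeness statement applied to $\pi$; your proposal does not address this.

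More substantively, the ``careful bookkeeping'' you defer conceals a genuine Hodge-theoretic input that the two-sided Cattani--Kaplan--Schmid/Mochizuki norm estimates alone do not supply. Those estimates express $|v|^2_{h_Q}$ of a flat section as a product of logarithmic factors determined by its degrees in the monodromy weight filtrations. To conclude that the $L^2$ condition reproduces \emph{exactly} the prolongation index $>\bm{r}-1$, one must control the borderline cases where $v_j(f_i)+\alpha_{E_j}(\widetilde{v_i})-r_i/m=-1$: there the convergence of $\int|z|^{-2}(-\log|z|)^{l}$ depends on the sign of $l$, so the logarithmic factors can tip the answer. The paper resolves this with the lower bound $1\lesssim|v|^2_{h_Q}$ for flat sections generating $R_X(\bV)$, i.e.\ the vanishing $W_{-1}(N)\cap R_X(\bV)=0$ of Lemma \ref{lem_W_F}, proved via the polarized mixed Hodge structure on the graded pieces, the hard Lefschetz isomorphism $N^l:{\rm Gr}^{W}_{l}\to{\rm Gr}^{W}_{-l}$, and the vanishing $\cF^{p_{\max}+l}=0$. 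Without this step the stalkwise identification of the $L^2$ sheaf with $\omega_X\otimes L\otimes(j_\ast S(\bV)\cap\cV_{>\bm{r}/m-1})$, on which both parts of the theorem rest, does not follow from the norm estimates you cite. You should also note that the $L^2$-adaptedness of the frame $(\widetilde{v_1},\dots,\widetilde{v_N})$ (needed to reduce the integrability of a sum to that of its terms) is itself a consequence of the orthogonality of the flat basis and the boundedness of $\exp(-\sum N_i\log z_i)$, another point your sketch passes over.
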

	\subsection{Multiplier Grauert-Riemenschneider sheaf}
	When $\bV=\bC_{X_{\rm reg}}$ is the trivial variation of Hodge structure, $S_X(\bC_{X_{\rm reg}},-A)$ is exactly the Grauert-Riemenschneider sheaf twisted by the multiplier ideal sheaf associated with $A$. This is called the multiplier ideals by Viehweg \cite{Viehweg1995,Viehweg2010} and it also appear in the Nadel vanishing theorem on complex spaces \cite{Demailly2012}. Let us recall its construction for the convenience of readers.
	\begin{description}
		\item[Log smooth case] Assume that $X$ is smooth and ${\rm supp}(A)$ is a simple normal crossing divisor. Then
		$$\sK_X(-A):=\omega_X\otimes\sO_X(-\lfloor A\rfloor).$$ 
		\item[General case] Let $\pi:\widetilde{X}\to X$ be a proper bimeromorphic morphism such that $\pi^o:=\pi|_{\pi^{-1}(X^o\backslash{\rm supp}(A))}:\pi^{-1}(X^o\backslash{\rm supp}(A))\to X^o\backslash{\rm supp}(A)$ is biholomorphic and the exceptional loci $E:=\pi^{-1}((X\backslash X^o)\cup {\rm supp}(A)))$ is a simple normal crossing divisor. Then
		\begin{align}
			\sK_X(-A):=\pi_\ast\left(\sK_{\widetilde{X}}(-\pi^\ast A)\right).
		\end{align}
	\end{description}
	Certainly $\sK_X(-A)\simeq S_X(\bC_{X_{\rm reg}},-A)$ and one has 
	$$\sK_X(-A)\simeq\omega_X\otimes\sI(-A)$$
	when $X$ is smooth ($\sI(-A)$ is the multiplier ideal sheaf associated with $A$).
	In this case, by Theorem \ref{thm_main_CVHS} one has the following.
	\begin{thm}\label{thm_main_dualizing_sheaf}
		Let $f:X\to Y$ be a locally K\"ahler proper morphism between complex spaces, such that $Y$ is irreducible and each irreducible component of $X$ is mapped onto $Y$. Let $L$ be a line bundle such that some multiple $mL=B+D$ where $B$ is a semipositive line bundle and $D$ is an effective Cartier divisor on $X$. Let $F$ be an arbitrary Nakano-semipositive vector bundle on $X$. Then $\sK_{X}(-\frac{1}{m}D)\otimes F\otimes L$ satisfies Koll\'ar's package with respect to $f$.
	\end{thm}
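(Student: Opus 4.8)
The plan is to deduce Theorem \ref{thm_main_dualizing_sheaf} from Theorem \ref{thm_main_CVHS} by specializing the variation of Hodge structure to the trivial one. First I would record that $\bC_{X_{\rm reg}}$, equipped with the trivial flat connection, Hodge decomposition concentrated in the single bidegree $(0,0)$, and the standard positive-definite pairing, is a polarized complex variation of Hodge structure in the sense of Definition \ref{defn_CVHS}. For this variation one has $p_{\rm max}=0$ and hence $S(\bC_{X_{\rm reg}})=\cV^{0,0}=\cV$, so that Saito's $S$-sheaf degenerates into an honest twist of the dualizing sheaf.

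The one point that must be checked is the identification $S_X(\bC_{X_{\rm reg}},-A)\simeq\sK_X(-A)$ asserted in the excerpt. In the log smooth case this amounts to computing the Deligne-Manin prolongation $\cV_{>\bm{r}-1}$ of the trivial variation. Since the monodromy of $\bC_{X_{\rm reg}}$ around each component $E_i$ is trivial, the residue of $\nabla$ has integer eigenvalues; the normalization that the real parts lie in $(r_i-1,r_i]$ then singles out the unique integer $\lfloor r_i\rfloor$, so that in the constant local frame $e$ the prolongation is generated by $z_i^{\lfloor r_i\rfloor}e$ along $E_i$. Hence $\cV_{>\bm{r}-1}\simeq\sO_X(-\lfloor A\rfloor)$, and because $S(\bC_{X_{\rm reg}})=\cV$ forces $j_\ast S(\bC_{X_{\rm reg}})=j_\ast\cV\supset\cV_{>\bm{r}-1}$, the intersection in the definition is simply $\cV_{>\bm{r}-1}$, giving
$$S_X(\bC_{X_{\rm reg}},-A)=\omega_X\otimes\left(j_\ast S(\bC_{X_{\rm reg}})\cap\cV_{>\bm{r}-1}\right)=\omega_X\otimes\sO_X(-\lfloor A\rfloor)=\sK_X(-A).$$
The general case then matches automatically, since both sheaves are defined by the same pushforward $\pi_\ast$ along a common log resolution $\pi:\widetilde{X}\to X$.

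With this identification in hand, I would conclude by invoking Theorem \ref{thm_main_CVHS}(2) verbatim, taking $\bV=\bC_{X_{\rm reg}}$ and $A=\tfrac1m D$: the sheaf $S_X(\bC_{X_{\rm reg}},-\tfrac1m D)\otimes F\otimes L$, which by the above equals $\sK_X(-\tfrac1m D)\otimes F\otimes L$, satisfies Koll\'ar's package with respect to $f$. I do not expect a genuine obstacle at this stage: all of the analytic content (torsion freeness, injectivity, vanishing, and decomposition) is already packaged inside Theorem \ref{thm_main_CVHS}, and the hypotheses on the locally K\"ahler proper morphism $f$, on the decomposition $mL=B+D$, and on the Nakano-semipositive bundle $F$ are literally the same in the two statements. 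The only place demanding care is the prolongation-index bookkeeping above, which verifies that the trivial variation reproduces exactly the floor $-\lfloor A\rfloor$ appearing in the multiplier Grauert-Riemenschneider sheaf.
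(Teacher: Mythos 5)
Your proposal is correct and follows exactly the route the paper takes: the paper deduces Theorem \ref{thm_main_dualizing_sheaf} from Theorem \ref{thm_main_CVHS} by taking $\bV=\bC_{X_{\rm reg}}$ and asserting ``Certainly $\sK_X(-A)\simeq S_X(\bC_{X_{\rm reg}},-A)$'', which is precisely the identification you verify. Your prolongation-index computation (the unique integer in $(r_i-1,r_i]$ being $\lfloor r_i\rfloor$, hence $\cV_{>\bm{r}-1}\simeq\sO_X(-\lfloor A\rfloor)$) is a correct filling-in of the detail the paper leaves implicit.
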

	Theorem \ref{thm_main_dualizing_sheaf} has an application to  Koll\'ar's package of pluricanonical bundles.
	\begin{cor}
		Let $f:X\to Y$ be a morphism from a compact K\"ahler manifold to an analytic space. Assume that  $\omega_X^{\otimes km}\simeq A\otimes\sO_X(D)$, $k,m>0$ where $A$ is a semipositive line bundle (e.g. a semiample line bundle) and $D$ is an effective Cartier divisor.
		Let $F$ be an arbitrary Nakano-semipositive vector bundle on $X$. Then $\sK_{X}(-\frac{1}{m}D)\otimes\omega^{\otimes k}_X\otimes F$ satisfies Koll\'ar's package with respect to $f$. In particular if $\omega_X$ is semipositive, then $\omega_X^{\otimes k}\otimes F$ satisfies Koll\'ar's package with respect to $f$ for every $k\geq1$.
	\end{cor}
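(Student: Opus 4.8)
The plan is to read off this Corollary from Theorem~\ref{thm_main_dualizing_sheaf} by the substitution $L:=\omega_X^{\otimes k}$, so the argument is essentially a matter of matching hypotheses. First I would record that the standing hypotheses on the morphism hold automatically: a compact K\"ahler manifold $X$ makes $f$ proper (by compactness of $X$) and locally K\"ahler (from the K\"ahler structure on $X$). The conditions that $Y$ be irreducible and that each irreducible component of $X$ dominate $Y$ belong to the very definition of Koll\'ar's package, so the assertion is understood for the morphism onto the reduced image $X\to f(X)$; assuming $X$ connected --- hence irreducible, as $X$ is smooth --- the image $f(X)$ is irreducible and dominated by $X$, placing us in the setting of Theorem~\ref{thm_main_dualizing_sheaf}.

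The substance is then the line-bundle bookkeeping. Writing $L=\omega_X^{\otimes k}$, the hypothesis $\omega_X^{\otimes km}\simeq A\otimes\sO_X(D)$ reads $L^{\otimes m}\simeq A\otimes\sO_X(D)$, i.e. $mL=A+D$ in additive notation. Taking $B:=A$, which is semipositive by assumption, this is precisely the decomposition $mL=B+D$ with $D$ effective Cartier demanded by Theorem~\ref{thm_main_dualizing_sheaf}. That theorem then grants Koll\'ar's package for
$$\sK_X\Big(-\tfrac{1}{m}D\Big)\otimes F\otimes L=\sK_X\Big(-\tfrac{1}{m}D\Big)\otimes\omega_X^{\otimes k}\otimes F,$$
which is the main assertion.

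For the final ``in particular'' claim, assume $\omega_X$ semipositive and fix $k\geq1$. Rather than specializing the displayed formula, I would apply Theorem~\ref{thm_main_dualizing_sheaf} afresh with $D=0$, $m=1$ and $L:=\omega_X^{\otimes(k-1)}$: a tensor power of a semipositive line bundle is semipositive, so $B:=\omega_X^{\otimes(k-1)}$ is semipositive (for $k=1$ it is simply $\sO_X$) and $mL=B+D=B$ holds trivially. Since $X$ is smooth and $D=0$ has (vacuously) simple normal crossing support, $\sK_X(0)=\omega_X$, so the theorem yields Koll\'ar's package for $\sK_X(0)\otimes F\otimes L=\omega_X^{\otimes k}\otimes F$. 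Deriving this from the main assertion instead would force the exponent $k-1$, which is illegitimate for $k=1$ under the constraint $k>0$ there; invoking the theorem directly sidesteps the off-by-one.

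Since both parts are formal consequences of Theorem~\ref{thm_main_dualizing_sheaf}, I do not expect a genuine obstacle. The only points asking for care are the reduction to the standing hypotheses on $f$ and $Y$, and the indexing subtlety in the pluricanonical case; everything else is the identity $mL=B+D$.
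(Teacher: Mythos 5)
Your proposal is correct and matches the paper's (implicit) derivation: the corollary is stated without proof as an immediate specialization of Theorem~\ref{thm_main_dualizing_sheaf} with $L=\omega_X^{\otimes k}$, $B=A$, which is exactly what you do. Your separate treatment of the ``in particular'' clause via $L=\omega_X^{\otimes(k-1)}$, $D=0$ is a sensible way to avoid the off-by-one at $k=1$ that would arise from specializing the main assertion.
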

	\section{Abstract Koll\'ar's package}
	In this section we recall the abstract Koll\'ar's package established in \cite{SC2021_kollar}.
	
	Let $X$ be a complex space of dimension $n$ and $X^o\subset X_{\rm reg}$ a dense Zariski open subset. Let $(E,h)$ be a hermitian vector bundle on $X^o$.
	Define the $\sO_X$-module $S_X(E,h)$ as follows. Let $U\subset X$ be an open subset.  $S_X(E,h)(U)$ is the space of holomorphic $E$-valued $(n,0)$-forms $\alpha$ on $U\cap X^o$ such that for every point $x\in U$, there is a neighborhood $V_x$ of $x$ so that 
	$$\int_{V_x\cap X^o}\alpha\wedge\overline{\alpha}<\infty.$$
	\begin{lem}(Functoriality,\cite[Proposition 2.5]{SC2021_kollar})\label{prop_L2ext_birational}
		Let $\pi:X'\to X$ be a proper holomorphic map between complex spaces which is biholomorphic over $X^o$. Then $$\pi_\ast S_{X'}(\pi^\ast E,\pi^\ast h)=S_X(E,h).$$
	\end{lem}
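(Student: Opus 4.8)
The plan is to regard both $S_X(E,h)$ and $\pi_\ast S_{X'}(\pi^\ast E,\pi^\ast h)$ as subsheaves of $\iota_\ast\Omega$, where $\iota\colon X^o\hookrightarrow X$ is the inclusion and $\Omega$ is the sheaf of holomorphic $E$-valued $(n,0)$-forms on $X^o$, and then to verify the two resulting inclusions of sections over every open $U\subseteq X$. First I would record the basic dictionary. Since $\pi$ is biholomorphic over $X^o$, the preimage $X'^o:=\pi^{-1}(X^o)$ is a dense Zariski open subset of $X'$ contained in $X'_{\rm reg}$, and $\pi$ restricts to a biholomorphism $X'^o\to X^o$. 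Hence $\alpha\mapsto\pi^\ast\alpha$ is a bijection between holomorphic $E$-valued $(n,0)$-forms on $X^o$ and holomorphic $\pi^\ast E$-valued $(n,0)$-forms on $X'^o$, and because the Hermitian metric is pulled back as well one has the pointwise identity $\pi^\ast(\alpha\wedge\overline\alpha)=\pi^\ast\alpha\wedge\overline{\pi^\ast\alpha}$. Consequently, for any Borel set $S\subseteq X^o$,
\[
\int_{\pi^{-1}(S)}\pi^\ast\alpha\wedge\overline{\pi^\ast\alpha}=\int_S\alpha\wedge\overline\alpha,
\]
so the only remaining point is that the two \emph{local} finiteness conditions defining the sheaves agree under this identification.

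For the inclusion $S_X(E,h)(U)\subseteq\bigl(\pi_\ast S_{X'}(\pi^\ast E,\pi^\ast h)\bigr)(U)$, take $\alpha\in S_X(E,h)(U)$ and a point $x'\in\pi^{-1}(U)$ with image $x=\pi(x')$. Choose a neighborhood $V_x\subseteq U$ of $x$ with $\int_{V_x\cap X^o}\alpha\wedge\overline\alpha<\infty$. Then $\pi^{-1}(V_x)$ is a neighborhood of $x'$, and since $\pi^{-1}(V_x)\cap X'^o=\pi^{-1}(V_x\cap X^o)$, the identity above gives $\int_{\pi^{-1}(V_x)\cap X'^o}\pi^\ast\alpha\wedge\overline{\pi^\ast\alpha}=\int_{V_x\cap X^o}\alpha\wedge\overline\alpha<\infty$. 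Thus $\pi^\ast\alpha$ is locally $L^2$ at every point of $\pi^{-1}(U)$, i.e. lies in $S_{X'}(\pi^\ast E,\pi^\ast h)(\pi^{-1}(U))$. This direction uses only continuity of $\pi$.

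The reverse inclusion is where properness enters, and I expect it to be the main obstacle. Given $\beta\in S_{X'}(\pi^\ast E,\pi^\ast h)(\pi^{-1}(U))$, let $\alpha$ be the corresponding form on $U\cap X^o$ under the biholomorphism $X'^o\to X^o$. Fix $x\in U$; the fibre $\pi^{-1}(x)$ is compact because $\pi$ is proper. Covering it by finitely many neighborhoods on which $\beta$ is $L^2$ produces an open set $W\supseteq\pi^{-1}(x)$ with $\int_{W\cap X'^o}\beta\wedge\overline\beta<\infty$. Since a proper map of complex spaces is closed, $\pi(X'\setminus W)$ is closed and, as $\pi^{-1}(x)\subseteq W$, it misses $x$; hence $V_x:=U\setminus\pi(X'\setminus W)$ is an open neighborhood of $x$ in $U$ with $\pi^{-1}(V_x)\subseteq W$. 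Applying the norm-preserving identity once more,
\[
\int_{V_x\cap X^o}\alpha\wedge\overline\alpha=\int_{\pi^{-1}(V_x)\cap X'^o}\beta\wedge\overline\beta\le\int_{W\cap X'^o}\beta\wedge\overline\beta<\infty,
\]
so $\alpha\in S_X(E,h)(U)$. Combining the two inclusions yields the claimed equality of sheaves. The crux is precisely the passage from an $L^2$-neighborhood of the compact fibre upstairs to a neighborhood of its image downstairs, for which the properness hypothesis is indispensable.
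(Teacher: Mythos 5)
The paper does not prove this lemma; it is imported verbatim from the authors' earlier work \cite[Proposition 2.5]{SC2021_kollar}, so there is no in-paper argument to compare against. Your proof is correct and is the expected standard argument: the change-of-variables identity over $X^o$ handles the easy inclusion, and the tube-lemma step (covering the compact fibre $\pi^{-1}(x)$ by finitely many $L^2$-neighborhoods and using that a proper map is closed to push the resulting open set down to a neighborhood $V_x$ with $\pi^{-1}(V_x)\subseteq W$) correctly isolates where properness is needed.
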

	\begin{lem}(\cite[Lemma 2.6]{SC2021_kollar})\label{lem_Kernel}
		Let $(F,h_F)$ be a hermitian vector bundle on $X$ (in particular $h_F$ is smooth on $X$). Then
		$$S_X(E,h)\otimes F\simeq S_X(E\otimes F,h\otimes h_{F}).$$
	\end{lem}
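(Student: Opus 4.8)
The plan is to exhibit a canonical $\sO_X$-linear morphism
$$\Phi: S_X(E,h)\otimes F\to S_X(E\otimes F,h\otimes h_F)$$
and to verify that it is an isomorphism by a purely local computation. Both sides are $\sO_X$-modules and the assertion is local on $X$, so I may work on a small open set $U\subset X$ over which $F$ is trivialized by a holomorphic frame $f_1,\dots,f_r$ with $r={\rm rank}\,F$. First I would define $\Phi$ on sections by $\alpha\otimes s\mapsto \alpha\otimes s$, where $\alpha$ is a local section of $S_X(E,h)$ and $s$ a local holomorphic section of $F$. To see that this is well defined, note that $\alpha\otimes s$ is a holomorphic $(E\otimes F)$-valued $(n,0)$-form on $U\cap X^o$ and that pointwise $|\alpha\otimes s|^2_{h\otimes h_F}=|\alpha|^2_h\,|s|^2_{h_F}$; since $h_F$ is smooth (hence continuous) on all of $X$ and $s$ is continuous, $|s|^2_{h_F}$ is bounded on any relatively compact $V\Subset U$, so the local $L^2$-bound for $\alpha$ with respect to $h$ forces the corresponding bound for $\alpha\otimes s$ with respect to $h\otimes h_F$. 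This construction is independent of all choices and therefore glues to a global morphism $\Phi$.

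The heart of the argument is to check that $\Phi$ is an isomorphism, and here I would use the frame to write an arbitrary $(E\otimes F)$-valued $(n,0)$-form as $\gamma=\sum_{i}\beta_i\otimes f_i$ with $\beta_i$ holomorphic $E$-valued $(n,0)$-forms. Under this identification $\gamma$ is holomorphic if and only if each $\beta_i$ is, and $(S_X(E,h)\otimes F)|_U$ corresponds to the $r$-tuples $(\beta_1,\dots,\beta_r)$ each of whose entries lies in $S_X(E,h)$. The only real content is then the pointwise identity $|\gamma|^2_{h\otimes h_F}=\sum_{i,i'}(h_F)_{ii'}\,h(g_i,g_{i'})$, where $\beta_i=g_i\,dz_1\wedge\cdots\wedge dz_n$ in local coordinates on $X^o$ and $(h_F)_{ii'}=h_F(f_i,f_{i'})$. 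Writing $M=(h(g_i,g_{i'}))$ for the positive semidefinite Gram matrix and $H=((h_F)_{ii'})$ for the Hermitian matrix of $h_F$, this quantity equals ${\rm tr}(MH)$ up to the usual conjugation conventions.

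The decisive — and essentially the only — technical input is that $H$ is a smooth positive-definite Hermitian matrix on all of $U$, so that on a relatively compact $V\Subset U$ there are constants $0<c\leq C$ with $c\,\mathrm{Id}\leq H\leq C\,\mathrm{Id}$. I want to stress that this uses precisely the hypothesis that $(F,h_F)$ is defined across the whole space $X$, including the boundary $X\setminus X^o$, so that no degeneration of $h_F$ can occur; this is what makes $F$ a harmless ``bounded twist.'' The elementary inequality $c\,{\rm tr}(M)\leq{\rm tr}(MH)\leq C\,{\rm tr}(M)$ for positive semidefinite $M$ then yields, pointwise on $V\cap X^o$,
$$c\sum_i|g_i|^2_h\ \leq\ |\gamma|^2_{h\otimes h_F}\ \leq\ C\sum_i|g_i|^2_h.$$
Integrating against the volume form shows that $\int_{V\cap X^o}\gamma\wedge\overline{\gamma}<\infty$ if and only if $\int_{V\cap X^o}\beta_i\wedge\overline{\beta_i}<\infty$ for every $i$; that is, the $L^2$-condition defining $S_X(E\otimes F,h\otimes h_F)$ over $V$ is equivalent to the condition that each $\beta_i$ lie in $S_X(E,h)$. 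Hence $\Phi$ is bijective over $V$, and since such $V$ cover $X$ this proves that $\Phi$ is the desired isomorphism. I do not expect any genuine obstacle beyond carefully recording the two-sided bound on $H$ and checking that the off-diagonal terms $i\neq i'$ are absorbed by the ${\rm tr}(MH)$ estimate.
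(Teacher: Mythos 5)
Your argument is correct: the two-sided bound $c\,\mathrm{Id}\leq H\leq C\,\mathrm{Id}$ on relatively compact sets (which exists precisely because $h_F$ is smooth and positive definite on all of $X$, not just on $X^o$) together with the trace inequality $c\,\mathrm{tr}(M)\leq\mathrm{tr}(MH)\leq C\,\mathrm{tr}(M)$ for $M\succeq 0$ is exactly the point, and the frame decomposition $\gamma=\sum_i\beta_i\otimes f_i$ correctly reduces the sheaf isomorphism to the equivalence of the local $L^2$-conditions. The paper itself gives no proof, citing \cite[Lemma 2.6]{SC2021_kollar} instead, but your reasoning is the standard argument that reference uses, so there is nothing to add.
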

	\begin{defn}\label{defn_tame_hermitian_bundle}
		$(E,h)$ is tame on $X$ if, for every point $x\in X$, there is an open neighborhood $U$ containing $x$, a proper bimeromorphic morphism $\pi:\widetilde{U}\to U$ which is biholomorphic over $U\cap X^o$, and a hermitian vector bundle $(Q,h_Q)$ on $\widetilde{U}$ such that 
		\begin{enumerate}
			\item $\pi^\ast E|_{\pi^{-1}(X^o\cap U)}\subset Q|_{\pi^{-1}(X^o\cap U)}$ as a subsheaf.
			\item There is a hermitian metric $h'_Q$ on $Q|_{\pi^{-1}(X^o\cap U)}$ so that $h'_Q|_{\pi^\ast E}\sim \pi^\ast h$ on $\pi^{-1}(X^o\cap U)$ and
			\begin{align}\label{align_tame}
				(\sum_{i=1}^r\|\pi^\ast f_i\|^2)^ch_Q\lesssim h'_Q
			\end{align}
			for some $c\in\bR$. Here $\{f_1,\dots,f_r\}$ is an arbitrary set of local generators of the ideal sheaf defining $\widetilde{U}\backslash \pi^{-1}(X^o)\subset \widetilde{U}$.
		\end{enumerate}
	\end{defn}
	The tameness condition (\ref{align_tame}) is independent of the choice of the set of local generators. In the present paper, a tame hermitian vector bundle $(E,h)$ is constructed as a subsheaf of the underlying holomorphic bundle of a variation of Hodge structure. This is a special case of tame harmonic bundles in the sense of Simpson \cite{Simpson1990} and Mochizuki \cite{Mochizuki20072,Mochizuki20071}. In this case, Condition (\ref{align_tame}) comes from the theory of degeneration of variation of Hodge structure \cite{Cattani_Kaplan_Schmid1986}.
	\begin{thm}(\cite[Proposition 2.9 and \S 4]{SC2021_kollar})\label{thm_abstract_Kollar_package}
		Let $f:X\rightarrow Y$ be a proper locally K\"ahler morphism from a complex space $X$ to an irreducible complex space $Y$. Assume that every irreducible component of $X$ is mapped onto $Y$, $X^o\subset X_{\rm reg}$ is a dense Zariski open subset and $(E,h)$ is a hermitian vector bundle on $X^o$ with Nakano semipositive curvature. Assume that $(E,h)$ is tame on $X$. Then $S_X(E,h)$ is a coherent sheaf which satisfies Koll\'ar's package with respect to $f:X\to Y$.
	\end{thm}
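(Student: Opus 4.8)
The plan is to obtain all four parts of Koll\'ar's package from a single object: the relative $L^2$-Dolbeault complex of $S_X(E,h)$ and its fibrewise harmonic theory. I first establish that $S_X(E,h)$ is coherent. Coherence is local, so fix $x\in X$ and invoke the tameness hypothesis (Definition~\ref{defn_tame_hermitian_bundle}): after shrinking to a neighbourhood $U$ and passing to a resolution $\pi:\widetilde U\to U$, biholomorphic over $U\cap X^o$ and with simple normal crossing boundary, the functoriality Lemma~\ref{prop_L2ext_birational} gives $\pi_\ast S_{\widetilde U}(\pi^\ast E,\pi^\ast h)=S_U(E,h)$. On the log-smooth model the tameness estimate~\eqref{align_tame} bounds $\pi^\ast h$ from below and above by polynomial weights in the local defining equations of the boundary, so that $S_{\widetilde U}(\pi^\ast E,\pi^\ast h)$ is identified with a coherent subsheaf of $\omega_{\widetilde U}\otimes Q$ cut out by prescribed vanishing orders along the boundary divisor; Grauert's direct image theorem then pushes coherence down to $U$.

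Next I set up the analytic machine. Equipping $X^o$ with a local complete K\"ahler metric and $E$ with $h$, I build the fine sheaf complex of locally square-integrable $E$-valued $(n,\bullet)$-forms with $\dbar$ taken in the sense of currents. The essential claim is that this complex is a resolution of $S_X(E,h)$: the $L^2$-$\dbar$-Poincar\'e lemma holds because Nakano semipositivity yields solvability of $\dbar$ with $L^2$ estimates on small balls, while tameness controls the degeneration of $h$ across $X\setminus X^o$. Applying $Rf_\ast$ and using relative $L^2$-Hodge theory for the proper locally K\"ahler morphism $f$, I identify $R^qf_\ast S_X(E,h)$ with the sheaf of fibrewise $\dbar$-harmonic $E$-valued $(n,q)$-forms. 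Via Lemma~\ref{lem_Kernel}, the same description holds for $S_X(E,h)\otimes L^{\otimes k}\simeq S_X(E\otimes L^{\otimes k},\,h\otimes h_L^{\otimes k})$, whose twisting bundle $E\otimes L^{\otimes k}$ remains Nakano semipositive; this realises the harmonic theory uniformly in $k$.

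With harmonic representatives available, the next three statements follow the Enoki--Takegoshi scheme. A torsion section of $R^qf_\ast S_X(E,h)$ is represented by a harmonic form supported over a proper analytic subset of $Y$, and unique continuation for harmonic forms forces it to vanish; since each component of $X$ dominates $Y$, this gives torsion freeness, while the vanishing for $q>\dim X-\dim Y$ is the fibrewise fact that $\dbar$-harmonic $(n,q)$-forms vanish once $q$ exceeds the fibre dimension. For injectivity, semipositivity of $L$ together with holomorphy of $s$ shows that multiplication by $s$ carries harmonic forms to harmonic forms, and a harmonic form annihilated by a nonzero holomorphic section vanishes on the dense set $\{s\neq0\}$, hence everywhere; therefore $R^qf_\ast(\times s)$ is injective. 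Koll\'ar's vanishing theorem then combines this injectivity with Serre vanishing: for $L$ ample on the projective $Y$ one takes $s$ a section of a high multiple, so that injectivity embeds $H^q(Y,R^pf_\ast S_X(E,h)\otimes L)$ into a cohomology group that vanishes by Serre for a large twist.

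Finally, for compact K\"ahler $X$ I would deduce the splitting $Rf_\ast S_X(E,h)\simeq\bigoplus_q R^qf_\ast S_X(E,h)[-q]$ in $D(Y)$ from a Deligne-type degeneration criterion: the global K\"ahler class induces a relative hard Lefschetz isomorphism on the harmonic cohomology sheaves, and the associated $\mathfrak{sl}_2$-action forces the differentials of the Dolbeault spectral sequence to vanish, producing the splitting through harmonic projection. The main obstacle throughout is analytic, not formal: the entire harmonic theory must be run with a metric that degenerates along $X\setminus X^o$. Establishing the $L^2$-$\dbar$-Poincar\'e lemma near the boundary, the existence and regularity of relative harmonic representatives, and the compatibility of harmonic projection with the degenerate metric are precisely the points where the tameness condition~\eqref{align_tame} and the locally K\"ahler hypothesis become indispensable, and making them rigorous is the heart of the proof.
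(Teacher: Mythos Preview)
The paper does not give its own proof of this theorem: it is quoted verbatim from the authors' earlier work \cite{SC2021_kollar} (Proposition~2.9 and \S4 there), and the present paper uses it as a black box. So there is no in-paper argument to compare against; the relevant comparison is with the $L^2$-theoretic machinery of \cite{SC2021_kollar}, which is in the Takegoshi--Enoki tradition.

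Your outline is broadly faithful to that approach: coherence via tameness plus functoriality (Lemma~\ref{prop_L2ext_birational}), a fine $L^2$-Dolbeault resolution of $S_X(E,h)$, and fibrewise harmonic theory to identify $R^qf_\ast S_X(E,h)$ with sheaves of harmonic $(n,q)$-forms. The torsion-freeness and injectivity sketches are the standard Takegoshi arguments and are correctly summarised. One point of divergence: for the decomposition theorem you invoke a relative hard Lefschetz isomorphism and an $\mathfrak{sl}_2$-action, which is the Deligne/Saito mechanism for pure Hodge modules. In the $L^2$ setting of \cite{SC2021_kollar} the splitting is obtained differently, via the Bochner--Kodaira--Nakano identity: Nakano semipositivity forces $\dbar$-harmonic $(n,q)$-forms to be annihilated by further first-order operators (in particular $\partial$-closed), and this extra rigidity produces the splitting directly through harmonic projection, without any Lefschetz-type input. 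Your hard Lefschetz route would require a relative K\"ahler class acting as an isomorphism on the $L^2$ cohomology sheaves, which is not available for a general tame Nakano-semipositive $(E,h)$; so as written that step is a genuine gap, not just a sketch.
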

	\section{Twisted Saito's S-sheaf and its Koll\'ar package}
	\subsection{Complex variation of Hodge structure}
	\begin{defn}{\cite[\S 8]{Simpson1988}}\label{defn_CVHS}
	Let $X^o$ be a complex manifold. Denote by $\sA^0_{X^o}$ the sheaf of $C^\infty$ functions on $X^o$.
		A polarized complex variation of Hodge structure on $X^o$ of weight $k$ is a flat holomorphic connection $(\cV,\nabla)$ on $X^o$ together with a decomposition $\cV\otimes_{\sO_{X^o}}\sA^0_{X^o}=\bigoplus_{p+q=k}\cV^{p,q}$ of $C^\infty$ bundles and a flat hermitian form $Q$ on $\cV$ such that
		\begin{enumerate}
			\item The hermitian form $h_Q$ which equals $(-1)^{p}Q$ on $\cV^{p,q}$ is a hermitian metric on the $C^\infty$ complex vector bundle $\cV\otimes_{\sO_{X^o}}\sA^0_{X^o}$.
			\item The decomposition $\cV\otimes_{\sO_{X^o}}\sA^0_{X^o}=\bigoplus_{p+q=k}\cV^{p,q}$ is orthogonal with respect to $h_Q$.
			\item The Griffiths transversality condition 
			\begin{align}\label{align_Griffiths transversality}
				\nabla(\cV^{p,q})\subset \sA^{0,1}(\cV^{p+1,q-1})\oplus \sA^{1,0}(\cV^{p,q})\oplus\sA^{0,1}(\cV^{p,q})\oplus \sA^{1,0}(\cV^{p-1,q+1})
			\end{align}
			holds for every $p$ and $q$. Here $\sA^{i,j}(\cV^{p,q})$ denotes the sheaf of smooth $(i,j)$-forms with values in $\cV^{p,q}$.
		\end{enumerate}
		Denote $S(\bV):=\cV^{p_{\rm max},k-p_{\rm max}}$ where $p_{\rm max}=\max\{p|\cV^{p,k-p}\neq0\}$. 
	\end{defn}
	Let $X$ be a complex manifold and $\cup_{i=1}^l E_i=E:=X\backslash X^o\subset X$ a simple normal crossing divisor where $E_1,\dots, E_l$ are irreducible components. Let $(\cV,\nabla,\{\cV^{p,q}\},Q)$ be a polarized complex variation of Hodge structure on $X^o:=X\backslash E$. There is a system of prolongations of $\cV$. Let $\bm{a}=(a_1,\dots,a_l)\in\bR^l$.
	Let $\cV_{>\bm{a}}$ be the Deligne-Manin prolongation with indices $>\bm{a}$. It is a locally free $\sO_X$-module extending $\cV$ such that $\nabla$ induces a connection with logarithmic singularities 
	$$\nabla:\cV_{>\bm{a}}\to\cV_{>\bm{a}}\otimes\Omega_X(\log E)$$ whose real part of the eigenvalues of the residue of $\nabla$ along $E_i$ belongs to $(a_i,a_i+1]$. Denote 
	$$R_X(\bV):=\cV_{>\bm{-1}}\cap j_\ast(S(\bV))$$
	where $j:X^o\to X$ is the open immersion and $\bm{-1}=(-1,\dots,-1)$. By the nilpotent orbit theorem \cite{Cattani_Kaplan_Schmid1986} $R_X(\bV)$ is a subbundle of $\cV_{>\bm{-1}}$, i.e. both $R_X(\bV)$ and $\cV_{>\bm{-1}}/R_X(\bV)$ are locally free.
	\subsection{$L^2$-adapted local frame on $R_X(\bV)$}
	Let $\bV=(\cV,\nabla,\{\cV^{p,q}\},Q)$ be a polarized complex variation of Hodge structure over $(\Delta^\ast)^n\times \Delta^m$. Denote by $h_Q$ the associated Hodge metric. Let $s_1,\dots,s_n$ be holomorphic coordinates of $(\Delta^\ast)^n$ and denote $D_i:=\{s_i=0\}\subset\Delta^{n+m}$. Let $N_i$ be the unipotent part of ${\rm Res}_{D_i}\nabla$ and let 
	$$p:\bH^{n}\times \Delta^m\to (\Delta^\ast)^n\times \Delta^m,$$ 
	$$(z_1,\dots,z_n,w_1,\dots,w_m)\mapsto(e^{2\pi\sqrt{-1}z_1},\dots,e^{2\pi\sqrt{-1}z_n},w_1,\dots,w_m)$$
	be the universal covering. Let
	$W^{(1)}=W(N_1),\dots,W^{(n)}=W(N_1+\cdots+N_n)$ be the monodromy weight filtrations (centered at 0) on $V:=\Gamma(\bH^n\times \Delta^m,p^\ast\cV)^{p^\ast\nabla}$.
	The following norm estimate for flat sections is proved by Cattani-Kaplan-Schmid \cite[Theorem 5.21]{Cattani_Kaplan_Schmid1986} for the case when $\bV$ has quasi-unipotent local monodromy and by Mochizuki \cite[Part 3, Chapter 13]{Mochizuki20072} for the general case.
	\begin{thm}\label{thm_Hodge_metric_asymptotic}
		For any $0\neq v\in {\rm Gr}_{l_n}^{W^{(n)}}\cdots{\rm Gr}_{l_1}^{W^{(1)}}V$, one has
		\begin{align*}
		|v|^2_{h_Q}\sim \left(\frac{\log|s_1|}{\log|s_2|}\right)^{l_1}\cdots\left(-\log|s_n|\right)^{l_n}
		\end{align*}
		over any region of the form
		$$\left\{(s_1,\dots s_n,w_1,\dots,w_m)\in (\Delta^\ast)^n\times \Delta^m\bigg|\frac{\log|s_1|}{\log|s_2|}>\epsilon,\dots,-\log|s_n|>\epsilon,(w_1,\dots,w_m)\in K\right\}$$
		for any $\epsilon>0$ and an arbitrary compact subset $K\subset \Delta^m$ .
	\end{thm}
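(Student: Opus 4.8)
The plan is to deduce the estimate from the several-variable $SL_2$-orbit theorem, in the spirit of Schmid's one-variable argument and its extension by Cattani-Kaplan-Schmid. The first step is to pass from the actual variation to its nilpotent orbit. By the nilpotent orbit theorem the Hodge filtration $\cF^\bullet$ is, on the region in question, approximated by the orbit $\exp(\sum_j z_j N_j)\cdot F_\infty$ of the limiting Hodge structure $F_\infty$, with an error controlled uniformly in the heights $y_j:=-\log|s_j|/(2\pi)=\mathrm{Im}(z_j)$. Combined with the $SL_2$-analysis of the model below, this transfers the asymptotics from the explicit nilpotent orbit to the transcendental metric $h_Q$, so that it suffices to compute the Hodge norm of the model built from the commuting nilpotents $N_1,\dots,N_n$ and $F_\infty$.

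Second, I would settle the one-variable case $n=1$. Schmid's $SL_2$-orbit theorem produces an $sl_2$-triple $(N,H,N^+)$ in which $H$ is the semisimple grading operator of the weight filtration $W(N)$, acting as multiplication by $l$ on ${\rm Gr}^{W(N)}_l$. Comparing the given nilpotent orbit with the orbit of the associated $SL_2(\bR)$-representation, the Hodge norm at height $y$ is governed by the one-parameter subgroup $y^{H/2}$; since $H$ acts by $l$ on ${\rm Gr}^{W(N)}_l$, a flat section $v\in{\rm Gr}^{W(N)}_l$ satisfies $|v|^2_{h_Q}\sim y^l\sim(-\log|s_1|)^l$, which is exactly the asserted formula for $n=1$.

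Third, the several-variable case is handled by the multivariable $SL_2$-orbit theorem. The commuting nilpotents give the nested weight filtrations $W^{(1)}=W(N_1),\dots,W^{(n)}=W(N_1+\cdots+N_n)$, and the technical core of Cattani-Kaplan-Schmid is that these are compatible as relative monodromy weight filtrations, producing commuting semisimple operators $Y_1,\dots,Y_n$ whose simultaneous grading is precisely the multi-grading ${\rm Gr}^{W^{(n)}}_{l_n}\cdots{\rm Gr}^{W^{(1)}}_{l_1}$ of the statement. Iterating the one-variable approximation along the ordered cone $y_1\geq y_2\geq\cdots\geq y_n$ factorizes the Hodge norm into a product over $j$, the $j$-th factor being governed by the subgroup $(y_j/y_{j+1})^{Y_j/2}$; on the joint eigenspace this yields $\prod_{j=1}^n(y_j/y_{j+1})^{l_j}$ with $y_{n+1}:=1$, which becomes the claimed product after substituting $y_j/y_{j+1}=\log|s_j|/\log|s_{j+1}|$ and $y_n\sim-\log|s_n|$. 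The region hypothesis $\log|s_1|/\log|s_2|>\epsilon,\dots,-\log|s_n|>\epsilon$ is exactly what keeps $(s_1,\dots,s_n)$ inside the cone on which the $SL_2$-approximation is uniformly valid.

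The main obstacle is the several-variable $SL_2$-orbit theorem itself: proving the compatibility of the nested weight filtrations and, above all, the \emph{uniformity} of the approximation over the full region rather than along individual rays is the genuinely hard part, and it is there that the representation theory of the $sl_2$-triples carries the argument. A second, independent difficulty is the passage from $\bR$-polarized (quasi-unipotent) variations, to which Cattani-Kaplan-Schmid applies directly, to arbitrary polarized complex variations of Hodge structure: the eigenvalues of the residues of $\nabla$ need not be real or rational, so one must first split off the semisimple part of the monodromy and apply the estimate to the unipotent part $N_i$. This last step is precisely the content of Mochizuki's extension of the norm estimate to tame harmonic bundles, and I would invoke his result for it rather than reprove it.
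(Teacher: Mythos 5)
The paper does not prove this statement at all: it is quoted as a known result, with the quasi-unipotent case attributed to Cattani--Kaplan--Schmid (Theorem 5.21 of their 1986 paper) and the general case to Mochizuki, and your outline is a faithful reconstruction of exactly that strategy (nilpotent orbit theorem, one- and several-variable $SL_2$-orbit theorems, then Mochizuki's extension to handle non-real residue eigenvalues). So your proposal agrees with the paper's treatment --- both ultimately defer to the same references --- and no gap needs to be flagged.
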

    The rest of this part is devoted to the norm estimate of the singular hermitian metric $h_Q$ on $R_X(\bV)$. 
	\begin{lem}\label{lem_W_F}
		Assume that $n=1$. Then $W_{-1}(N_1)\cap R_X(\bV)_{\bf 0}=0$.
	\end{lem}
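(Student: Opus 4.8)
The plan is to translate the assertion into the language of the limiting mixed Hodge structure of $\bV$ at the puncture and then exploit the hard Lefschetz property of $N_1$. Since $n=1$, write $s=s_1$, $N=N_1$, and let $(V,W_\bullet(N),F_\infty)$ be the limiting mixed Hodge structure, where $V=\Gamma(\bH\times\Delta^m,p^\ast\cV)^{p^\ast\nabla}$ and $F_\infty$ is Schmid's limiting Hodge filtration. First I would recall the canonical isomorphism $e_0\colon V\xrightarrow{\sim}(\cV_{>\bm{-1}})_{\bf 0}$ given by the elementary (single-valued) sections of the Deligne--Manin prolongation, under which $W_\bullet(N)$ is carried to the induced filtration on the central fibre. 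Under $e_0$ the extended Hodge subbundle $\cV_{>\bm{-1}}\cap j_\ast F^{p_{\rm max}}=\cV_{>\bm{-1}}\cap j_\ast S(\bV)=R_X(\bV)$ has central fibre equal to $F_\infty^{p_{\rm max}}$; this is the standard fact that the Hodge bundles extend to holomorphic subbundles of the canonical prolongation whose fibre at $0$ is the limiting Hodge filtration. Thus the lemma becomes the purely Hodge-theoretic assertion that $F_\infty^{p_{\rm max}}\cap W_{-1}(N)=0$ inside $V$.

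Next I would pass to the Deligne bigrading $V=\bigoplus_{p,q}I^{p,q}$, normalised so that $W_l(N)=\bigoplus_{p+q-k\le l}I^{p,q}$ and $F_\infty^{a}=\bigoplus_{p\ge a}I^{p,q}$ (the central weight being $k$, while $W_\bullet(N)$ is centred at $0$). Because $F_\infty^{p_{\rm max}}$ is the deepest non-zero step of $F_\infty$, one has $F_\infty^{p_{\rm max}}=\bigoplus_q I^{p_{\rm max},q}$, and intersecting with $W_{-1}(N)$ selects exactly the summands with $p_{\rm max}+q<k$. Hence the claim reduces to
\[
I^{p_{\rm max},q}=0\qquad\text{whenever } p_{\rm max}+q<k.
\]
To prove this, set $l=p_{\rm max}+q-k<0$ and use that $N$ is a morphism of the limiting mixed Hodge structure of type $(-1,-1)$ together with the hard Lefschetz isomorphism $N^{-l}\colon \mathrm{Gr}^{W}_{-l}\xrightarrow{\sim}\mathrm{Gr}^{W}_{l}$; its last factor $N\colon \mathrm{Gr}^{W}_{l+2}\twoheadrightarrow\mathrm{Gr}^{W}_{l}$ is therefore surjective. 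By strictness of morphisms of Hodge structures this surjectivity descends to the Hodge $(p_{\rm max},q)$-components, so every class in $I^{p_{\rm max},q}$ is $N$ of a class of Hodge type $(p_{\rm max}+1,q+1)$. But $F_\infty^{p_{\rm max}+1}=0$ (as $p_{\rm max}$ is maximal, the limiting and generic Hodge numbers coinciding), so the $(p_{\rm max}+1,q+1)$-component vanishes and hence $I^{p_{\rm max},q}=0$, as required.

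The step I expect to demand the most care is the identification in the first paragraph, namely that $R_X(\bV)_{\bf 0}=e_0\bigl(F_\infty^{p_{\rm max}}\bigr)$ and that $W_\bullet(N)$ is the weight filtration of a genuine polarized limiting mixed Hodge structure, in the present generality of a polarized \emph{complex} variation of Hodge structure with arbitrary (not necessarily quasi-unipotent) local monodromy. For this I would invoke Schmid's $\mathrm{SL}_2$- and nilpotent-orbit theorems in the form extended to tame harmonic bundles by Mochizuki, which simultaneously supply the polarized limiting mixed Hodge structure, the hard Lefschetz property of $N_1$, and the asymptotics underlying Theorem \ref{thm_Hodge_metric_asymptotic}; the norm estimate there is precisely what guarantees that the elementary sections generate the $L^2$-prolongation $\cV_{>\bm{-1}}$ and that the fibrewise picture above is correct. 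Once these inputs are in place, the remaining bigrading computation is formal.
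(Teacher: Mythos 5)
Your proposal is correct and follows essentially the same route as the paper: both arguments identify $R_X(\bV)_{\bf 0}$ with the deepest piece $F_\infty^{p_{\rm max}}$ of the limiting Hodge filtration, endow $\mathrm{Gr}^{W(N_1)}_\bullet$ with its pure Hodge structures, and use the hard Lefschetz isomorphism for $N_1$ together with $F^{p_{\rm max}+1}=0$ to force the negative-weight part to vanish. The only cosmetic difference is that you phrase this via the Deligne bigrading and surjectivity of a single $N$ on $\mathrm{Gr}^W$, while the paper picks the most negative nonzero weight $-l$ and applies the full isomorphism $N^l:\mathrm{Gr}^W_{l}\to\mathrm{Gr}^W_{-l}$ of type $(-l,-l)$ at once.
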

	\begin{proof}
		Assume that $W_{-1}(N_1)\cap R_X(\bV)_{\bf 0}\neq0$. Let $k$ be the weight of $\bV$. Let $l=\max\{l|W_{-l}(N_1)\cap R_X(\bV)_{\bf 0}\neq0\}$. Then $l\geq 1$. 
		By \cite[6.16]{Schmid1973}, the decomposition $\cV_{>\bm{-1}}\simeq \bigoplus_{p+q=k}j_\ast \cV^{p,q}\cap\cV_{>\bm{-1}}$ induces a pure Hodge structure of weight $m+k$ on $W_{m}(N_1)/W_{m-1}(N_1)$. Moreover 
		\begin{align}\label{align_hard_lef_N}
		N^l: W_{l}(N_1)/W_{l-1}(N_1)\to W_{-l}(N_1)/W_{-l-1}(N_1)
		\end{align}
		is an isomorphism of type $(-l,-l)$. Denote $S(\bV)=\cV^{p,k-p}$. By the definition of $l$, any nonzero element $\alpha\in W_{-l}(N_1)\cap R_X(\bV)_{\bf 0}$ induces a nonzero $[\alpha]\in W_{-l}(N_1)/W_{-l-1}(N_1)$ of Hodge type $(p,k-l-p)$. Since (\ref{align_hard_lef_N}) is an isomorphism, there is $\beta\in W_{l}(N_1)/W_{l-1}(N_1)$ of Hodge type $(p+l,k-p)$ such that $N^l(\beta)=[\alpha]$. However, $\beta=0$ since $\cF^{p+l}=0$. This contradicts to the fact that $[\alpha]\neq0$. $W_{-1}(N_1)\cap R_X(\bV)_{\bf 0}$ therefore has to be zero.
	\end{proof}
	Denote by $T_i$ the local monodromy operator of $\bV$ around $D_i$.
	Since $T_1,\dots,T_n$ are pairwise commutative, there is a finite decomposition 
	$$\cV_{>\bm{-1}}|_{\bf 0}=\bigoplus_{-1<\alpha_1,\dots,\alpha_n\leq 0}\bV_{\alpha_1,\dots,\alpha_n}$$
	such that $(T_i-e^{2\pi\sqrt{-1}\alpha_i}{\rm Id})$ is unipotent on $\bV_{\alpha_1,\dots,\alpha_n}$ for each $i=1,\dots,n$. 
	
	Let $$v_1,\dots, v_N\in R_X(\bV)|_{\bf 0}\cap\bigcup_{-1<\alpha_1,\dots,\alpha_n\leq 0}\bV_{\alpha_1,\dots,\alpha_n}$$
	be an orthogonal basis of $R_X(\bV)|_{\bf 0}\simeq \Gamma(\bH^n,p^\ast S(\bV))^{p^\ast\nabla}$. Then $\widetilde{v_1},\dots,\widetilde{v_N}$ that are determined by
	\begin{align}\label{align_adapted_frame}
	\widetilde{v_j}:={\rm exp}\left(\sum_{i=1}^n\log z_i(\alpha_i{\rm Id}+N_i)\right)v_j\textrm{ if } v_j\in\bV_{\alpha_1,\dots, \alpha_n},\quad \forall j=1,\dots,N
	\end{align}
	form a frame of $\cV_{>\bm{-1}}\cap j_\ast S(\bV)$.
	To be precise, we always use the notation $\alpha_{E_i}(\widetilde{v_j})$ instead of $\alpha_i$ in (\ref{align_adapted_frame}). By (\ref{align_adapted_frame}) we acquire that 
	\begin{align*}
	|\widetilde{v_j}|^2_{h_Q}&\sim\left|\prod_{i=1}^nz_i^{\alpha_{E_i}(\widetilde{v_j})}{\rm exp}\left(\sum_{i=1}^nN_i\log z_i\right)v_j\right|^2_{h_Q}\\\nonumber
	&\sim|v_j|^2_{h_Q}\prod_{i=1}^n |z_i|^{2\alpha_{E_i}(\widetilde{v_j})},\quad j=1,\dots,N
	\end{align*}
	where $\alpha_{E_i}(\widetilde{v_j})\in(-1,0]$, $\forall i=1,\dots n$. 
	
	By Theorem \ref{thm_Hodge_metric_asymptotic} and Lemma \ref{lem_W_F}
	one has
	\begin{align*}
	|v_j|^2_{h_Q}\sim \left(\frac{\log|s_1|}{\log|s_2|}\right)^{l_1}\cdots\left(-\log|s_n|\right)^{l_n},\quad l_1\leq l_2\leq\dots\leq l_{n-1},
	\end{align*}
	over any region of the form
	$$\left\{(s_1,\dots s_n,w_1,\dots,w_{m})\in (\Delta^\ast)^n\times \Delta^{m}\bigg|\frac{\log|s_1|}{\log|s_2|}>\epsilon,\dots,-\log|s_n|>\epsilon,(w_1,\dots,w_{m})\in K\right\}$$
	for any $\epsilon>0$ and an arbitrary compact subset $K\subset \Delta^{m}$. Therefore we obtain that
	\begin{align*}
	1\lesssim |v_j|\lesssim|z_1\cdots z_r|^{-\epsilon},\quad\forall\epsilon>0.
	\end{align*}
	
	The local frame $(\widetilde{v_1},\dots,\widetilde{v_N})$ is $L^2$-adapted in the sense of S. Zucker \cite[page 433]{Zucker1979}.
	\begin{defn}
		Let $(E,h)$ be a vector bundle with a possibly singular hermitian metric $h$ on a hermitian manifold $(X,ds^2_0)$. A holomorphic local frame $(v_1,\dots,v_N)$ of $E$ is called $L^2$-adapted if, for every set of measurable functions $\{f_1,\dots,f_N\}$, $\sum_{i=1}^Nf_iv_i$ is locally square integrable if and only if $f_iv_i$ is locally square integrable for each $i=1,\dots,N$.
	\end{defn}
	To see that $(\widetilde{v_1},\dots,\widetilde{v_N})$ is $L^2$-adapted, let us consider the measurable functions $f_1,\dots,f_N$. If 
	$$\sum_{j=1}^N f_j\widetilde{v_j}={\rm exp}\left(\sum_{i=1}^nN_i\log z_i\right)\left(\sum_{j=1}^N f_j\prod_{i=1}^n |z_i|^{\alpha_{E_i}(\widetilde{v_j})}v_j\right)$$
	is locally square integrable, then 
	$$\sum_{j=1}^N f_j\prod_{i=1}^n |z_i|^{\alpha_{E_i}(\widetilde{v_j})}v_j$$
	is locally square integrable because the entries of the matrix ${\rm exp}\left(-\sum_{i=1}^nN_i\log z_i\right)$ are $L^\infty$-bounded.
	Since $(v_1,\dots,v_N)$ is an orthogonal basis, 
	$|f_j\widetilde{v_j}|_{h_Q}\sim\prod_{i=1}^n |z_i|^{\alpha_{E_i}(\widetilde{v_j})}|f_jv_j|_{h_Q}$ is locally square integrable for each $j=1,\dots,N$. 
	
	In conclusion, we obtain the following proposition.
	\begin{prop}\label{prop_adapted_frame}
		Let $(X,ds^2_0)$ be a hermitian manifold and $E$ a normal crossing divisor on $X$. Let $\bV$ be a polarized complex variation of Hodge structure on $X^o:=X\backslash E$. Then there is an $L^2$-adapted holomorphic local frame $(\widetilde{v_1},\dots,\widetilde{v_N})$ of $R_X(\bV)$ at every point $x\in E$. There are moreover $\alpha_{E_i}(\widetilde{v_j})\in(-1,0]$, $i=1,\dots, r$, $j=1,\dots,N$ and positive real functions $\lambda_j\in C^\infty(X\backslash E)$, $j=1,\dots,N$ such that
		\begin{align}\label{align_L2adapted_frame}
		|\widetilde{v_j}|^2\sim\lambda_j\prod_{i=1}^r |z_i|^{2\alpha_{E_i}(\widetilde{v_j})},\quad \forall j=1,\dots,N
		\end{align}
		and
		$$1\lesssim \lambda_j\lesssim|z_1\cdots z_r|^{-\epsilon},\quad\forall\epsilon>0$$
		for each $j=1,\dots,N$.
		Here $z_1,\cdots,z_n$ are holomorphic local coordinates on $X$ so that  $E_i=\{z_i=0\}$, $i=1,\cdots r$ and $E
		=\{z_1\cdots z_r=0\}$.
	\end{prop}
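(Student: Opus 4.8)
The plan is to prove the proposition by passing to a local uniformization and constructing the frame explicitly, so that the statement becomes a direct combination of the multivariable norm estimate of Theorem \ref{thm_Hodge_metric_asymptotic} with the weight-positivity supplied by Lemma \ref{lem_W_F}. First I would fix $x\in E$ and holomorphic coordinates $z_1,\dots,z_n$ on a polydisc around $x$ with $E_i=\{z_i=0\}$ for $i=1,\dots,r$ and $E=\{z_1\cdots z_r=0\}$, so that $\bV$ restricts to a polarized complex variation of Hodge structure on $(\Delta^\ast)^r\times\Delta^{n-r}$. The commuting monodromy operators $T_1,\dots,T_r$ give the eigenspace decomposition $\cV_{>\bm{-1}}|_{\bf 0}=\bigoplus_{-1<\alpha_1,\dots,\alpha_r\leq 0}\bV_{\alpha_1,\dots,\alpha_r}$, and I would select an orthogonal basis $v_1,\dots,v_N$ of $R_X(\bV)|_{\bf 0}$ adapted to this decomposition, together with the sections $\widetilde{v_j}$ defined by formula (\ref{align_adapted_frame}).

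The first substantive step is to check that $(\widetilde{v_1},\dots,\widetilde{v_N})$ is a genuine holomorphic frame of $R_X(\bV)$ near $x$. The operator ${\rm exp}\left(\sum_i\log z_i(\alpha_i{\rm Id}+N_i)\right)$ turns the flat multivalued sections $v_j$ into single-valued holomorphic sections of $\cV_{>\bm{-1}}$, because the multivaluedness introduced by $\log z_i$ is exactly compensated by the monodromy of $v_j$ around $\{z_i=0\}$. Since the $v_j$ span $R_X(\bV)|_{\bf 0}$ and the exponential factor is invertible, the $\widetilde{v_j}$ generate $R_X(\bV)=\cV_{>\bm{-1}}\cap j_\ast S(\bV)$ as an $\sO_X$-module, using that $R_X(\bV)$ is a subbundle of $\cV_{>\bm{-1}}$ by the nilpotent orbit theorem.

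The second step is the norm estimate (\ref{align_L2adapted_frame}). The exponential formula yields $|\widetilde{v_j}|^2_{h_Q}\sim|v_j|^2_{h_Q}\prod_{i=1}^r|z_i|^{2\alpha_{E_i}(\widetilde{v_j})}$, since ${\rm exp}\left(\sum_iN_i\log z_i\right)$ acts with at most $\log$-polynomial distortion. I would then put $\lambda_j:=|v_j|^2_{h_Q}$ and read off its size from Theorem \ref{thm_Hodge_metric_asymptotic}: because $v_j\in R_X(\bV)|_{\bf 0}$, Lemma \ref{lem_W_F} (and the analogous positivity for the iterated graded pieces ${\rm Gr}^{W^{(r)}}\cdots{\rm Gr}^{W^{(1)}}$) forces every weight exponent $l_i$ in the asymptotic to be nonnegative, giving the lower bound $1\lesssim\lambda_j$; the upper bound $\lambda_j\lesssim|z_1\cdots z_r|^{-\epsilon}$ for all $\epsilon>0$ then follows because any fixed power of $-\log|z_i|$ is dominated by $|z_i|^{-\epsilon}$.

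Finally, for $L^2$-adaptedness I would use that ${\rm exp}\left(\pm\sum_iN_i\log z_i\right)$ has $L^\infty$-bounded entries, so local square-integrability of $\sum_jf_j\widetilde{v_j}$ is equivalent to that of $\sum_jf_j\prod_i|z_i|^{\alpha_{E_i}(\widetilde{v_j})}v_j$; the asymptotic orthogonality of the $v_j$ then splits this into the individual conditions on each $f_j\widetilde{v_j}$. I expect the main obstacle to be precisely this asymptotic-orthogonality input together with the bookkeeping of the multivariable monodromy weight filtration: Lemma \ref{lem_W_F} is stated only for one variable, so the delicate point is to propagate the weight-positivity through the relative weight filtrations $W^{(1)},\dots,W^{(r)}$ so that all exponents $l_i$ are simultaneously nonnegative, and to guarantee that sections in distinct graded pieces stay asymptotically orthogonal in the sense needed for the frame to be $L^2$-adapted.
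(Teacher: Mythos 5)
Your proposal follows essentially the same route as the paper: the eigenspace decomposition under the commuting monodromies, the twisted frame $\widetilde{v_j}={\rm exp}\bigl(\sum_i\log z_i(\alpha_i{\rm Id}+N_i)\bigr)v_j$, the Cattani--Kaplan--Schmid/Mochizuki norm estimate combined with Lemma \ref{lem_W_F} applied to the partial sums $N_1+\cdots+N_i$ to force the weight exponents to be nonnegative, and the boundedness of ${\rm exp}\bigl(\pm\sum_iN_i\log z_i\bigr)$ plus asymptotic orthogonality of the $v_j$ for $L^2$-adaptedness. The two subtleties you flag (propagating the one-variable positivity of Lemma \ref{lem_W_F} through the iterated weight filtrations, and the asymptotic orthogonality) are exactly the points the paper treats tersely, and your treatment is consistent with it.
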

	\subsection{Twisted Saito's S-sheaf}
	Let $X$ be a complex space and $X^o\subset X_{\rm reg}$ a dense Zariski open subset. Let $\bV=(\cV,\nabla,\{\cV^{p,q}\},Q)$ be a polarized complex variation of Hodge structure on $X^o$. Let $A$ be an effective $\bQ$-Cartier divisor on $X$. We define a coherent sheaf $S_X(\bV,-A)$ as follows. 
	\begin{description}
		\item[Log smooth case] Assume that $X$ is smooth, $E:=X\backslash X^o$ is a simple normal crossing divisor and ${\rm supp}(A)\subset E$. Denote by $E=\cup_{i=1}^lE_i$ the irreducible decomposition and denote $A=\sum_{i=1}^l{r_i}E_i$ with $r_1,\dots,r_l\in\bQ_{\geq 0}$. Let $\bm{r}=(r_1,\dots,r_l)$. Let $\cV_{>\bm{r}-1}$ be the Deligne-Manin prolongation with indices $>\bm{r}-1$. It is a locally free $\sO_X$-module extending $\cV$ such that $\nabla$ induces a connection with logarithmic singularities 
		$$\nabla:\cV_{>\bm{r}-1}\to\cV_{>\bm{r}-1}\otimes\Omega_X(\log E)$$ where the real part of the eigenvalues of the residue of $\nabla$ along $E_i$ belongs to $(r_i-1,r_i]$ for each $i$. Let $j:X^o\to X$ be the open immersion. Denote $S(\bV):=\cV^{p_{\rm max},k-p_{\rm max}}$ where $p_{\rm max}=\max\{p|\cV^{p,k-p}\neq0\}$. 
		Define $$S_{X}(\bV,-A):=\omega_X\otimes\left(j_\ast S(\bV)\cap\cV_{>\bm{r}-1}\right).$$ 
		\item[General case] Let $\pi:\widetilde{X}\to X$ be a proper bimeromorphic morphism such that $\pi^o:=\pi|_{\pi^{-1}(X^o\backslash{\rm supp}(A))}:\pi^{-1}(X^o\backslash{\rm supp}(A))\to X^o\backslash{\rm supp}(A)$ is biholomorphic and the exceptional loci $E:=\pi^{-1}((X\backslash X^o)\cup {\rm supp}(A)))$ is a simple normal crossing divisor. Then
		\begin{align}
			S_X(\bV,-A)\simeq\pi_\ast\left(S_{\widetilde{X}}(\pi^{o\ast}\bV,-\pi^\ast A)\right).
		\end{align}
	\end{description}
	Let $L$ be a line bundle such that some multiple $mL=B+D$ where $B$ is a semipositive line bundle and $D$ is an effective Cartier divisor on $X$.
	Let $h_B$ be a hermitian metric on $B$ with semipositive curvature and $h_D$ the unique singular hermitian metric on $\sO_X(D)$ determined by the effective divisor $D$.  $h_D$ is a singular hermitian metric, smooth over $X\backslash D$, defined as follows. Let $s\in H^0(X,\sO_X(D))$ be the defining section of $D$ and let $h_0$ be an arbitrary smooth hermitian metric on $\sO_X(D)$. Then $h_D$ is defined by $|\xi|_{h_D}=|\xi|_{h_0}/|s|_{h_0}$ which is independent of the choice of $h_0$.
	Denote $h_L:=(h_Bh_D)^{\frac{1}{m}}$.
	The main result of this section is
	\begin{thm}\label{thm_L2_interpretation}
		$S_X(\bV,-\frac{1}{m}D)\otimes L\simeq S_X(S(\bV)\otimes L,h_Qh_L)$. In particular $S_X(\bV,-\frac{1}{m}D)$ is independent of the choice of the desingularization $\pi:\widetilde{X}\to X$. 
	\end{thm}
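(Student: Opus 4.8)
The plan is to reduce to the log smooth situation by functoriality and the projection formula, and then to compare the two sheaves locally through an explicit $L^2$-adapted frame. Since the assertion is local on $X$ and both sides behave well under a log resolution, I would fix $\pi:\widetilde X\to X$ as in the general-case definition: proper bimeromorphic, biholomorphic over $X^o\setminus\mathrm{supp}(D)$, with $E':=\pi^{-1}((X\setminus X^o)\cup\mathrm{supp}(D))$ a simple normal crossing divisor. On the left, the defining formula in the general case together with the projection formula gives
$$S_X(\bV,-\frac{1}{m}D)\otimes L\simeq\pi_\ast\big(S_{\widetilde X}(\pi^{o\ast}\bV,-\frac{1}{m}\pi^\ast D)\otimes\pi^\ast L\big).$$
On the right, $h_Qh_L$ is a smooth hermitian metric on $S(\bV)\otimes L$ over $X^o\setminus\mathrm{supp}(D)$, so the functoriality Lemma \ref{prop_L2ext_birational} yields
$$S_X(S(\bV)\otimes L,h_Qh_L)\simeq\pi_\ast\,S_{\widetilde X}\big(S(\pi^{o\ast}\bV)\otimes\pi^\ast L,\ \pi^\ast h_Q\,\pi^\ast h_L\big),$$
where I use $\pi^\ast S(\bV)=S(\pi^{o\ast}\bV)$, that $\pi^\ast h_Q$ is the Hodge metric of $\pi^{o\ast}\bV$, and that $\pi^\ast h_L$ is precisely the metric attached to the decomposition $m\pi^\ast L=\pi^\ast B+\pi^\ast D$. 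Applying $\pi_\ast$ at the very end, it therefore suffices to prove the isomorphism in the log smooth case.

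So I may assume $X$ smooth, $E=X\setminus X^o$ a simple normal crossing divisor, $\mathrm{supp}(D)\subset E$, and I would work in a coordinate polydisk with $E=\{z_1\cdots z_r=0\}$, $D=\sum_i d_iE_i$ and $A=\frac{1}{m}D=\sum_i r_iE_i$, $r_i=d_i/m$. By the index-$\bm{r}$ analogue of Proposition \ref{prop_adapted_frame} I fix an $L^2$-adapted holomorphic frame $\widetilde w_1,\dots,\widetilde w_N$ of $\cV_{>\bm{r}-1}\cap j_\ast S(\bV)$ with
$$|\widetilde w_j|_{h_Q}^2\sim\lambda_j\prod_{i=1}^r|z_i|^{2\beta_{E_i}(\widetilde w_j)},\qquad\beta_{E_i}(\widetilde w_j)\in(r_i-1,r_i],\quad 1\lesssim\lambda_j\lesssim|z_1\cdots z_r|^{-\epsilon}.$$
With $dz$ and $\eta$ local frames of $\omega_X$ and $L$, local sections of the left-hand sheaf $\omega_X\otimes(\cV_{>\bm{r}-1}\cap j_\ast S(\bV))\otimes L$ are $\sum_jc_j\,dz\otimes\widetilde w_j\otimes\eta$ with $c_j$ holomorphic on $U$, while local sections of the right-hand sheaf are $\sum_jg_j\,dz\otimes\widetilde w_j\otimes\eta$ with $g_j$ holomorphic on $U\cap X^o$ and subject to the local $L^2$ condition. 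Because $\eta$ contributes only the scalar factor $|\eta|_{h_L}^2\sim\varphi\prod_i|z_i|^{-2r_i}$ with $\varphi$ smooth and bounded between positive constants, the frame $(\widetilde w_j\otimes\eta)$ is again $L^2$-adapted for $h_Qh_L$ --- the cross terms vanish by the orthogonality used in Proposition \ref{prop_adapted_frame} and the unipotent factor remains $L^\infty$-bounded --- so the $L^2$ condition decouples into $\int|g_j|^2|\widetilde w_j|_{h_Q}^2|\eta|_{h_L}^2\,dV<\infty$ for every $j$.

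The decisive point is the exponent bookkeeping:
$$|\widetilde w_j|_{h_Q}^2|\eta|_{h_L}^2\sim\lambda_j\varphi\prod_{i=1}^r|z_i|^{2(\beta_{E_i}(\widetilde w_j)-r_i)},\qquad\beta_{E_i}(\widetilde w_j)-r_i\in(-1,0],$$
so the singular weight $-2r_i$ of $h_L$ shifts the Deligne-Manin index exactly into the borderline range $(-1,0]$. Expanding $g_j$ on the punctured polydisk into a Laurent series and integrating monomial by monomial --- legitimate since $1\lesssim\lambda_j\varphi$ bounds the weight below and $\lambda_j\varphi\lesssim|z_1\cdots z_r|^{-\epsilon}$ bounds it above by an arbitrarily small extra power --- finiteness of the integral forces each exponent $k_i$ of $z_i$ to satisfy $k_i+\beta_{E_i}(\widetilde w_j)-r_i>-1$, i.e. $k_i\ge0$, and conversely every holomorphic $g_j$ is square integrable. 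Hence $g_j$ extends holomorphically across $E$, the two spaces of local sections agree under $c_j=g_j$, and the isomorphism holds in the log smooth case, thus after $\pi_\ast$ in general. The independence of $S_X(\bV,-\frac{1}{m}D)$ from $\pi$ is then immediate: the right-hand side $S_X(S(\bV)\otimes L,h_Qh_L)$ is defined by an $L^2$ condition on $X^o$ with no reference to a resolution, so two choices of $\pi$ give isomorphic sheaves after tensoring with $L^{-1}$ (working locally, where such an $L$ always exists). I expect the genuine difficulty to lie precisely in this local comparison --- in justifying the decoupling of the multivariable $L^2$ condition through $L^2$-adaptedness, and in the sharp matching $\beta_{E_i}(\widetilde w_j)-r_i\in(-1,0]$ that identifies the Deligne-Manin index $(r_i-1,r_i]$ with the threshold of local square-integrability --- whereas the reduction to the log smooth case is essentially formal.
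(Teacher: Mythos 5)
Your proposal is correct and follows essentially the same route as the paper: reduce to the log smooth case by the functoriality Lemma \ref{prop_L2ext_birational}, then compare locally on a polydisc using an $L^2$-adapted frame coming from the Cattani--Kaplan--Schmid/Mochizuki norm estimates and the elementary integrability criterion of Lemma \ref{lem_integral}. The only cosmetic difference is that the paper works with the frame of $\cV_{>\bm{-1}}\cap j_\ast S(\bV)$ and extracts the required vanishing orders $\lfloor \frac{r_i}{m}-\alpha_{E_j}\rfloor$ of the coefficient functions, whereas you normalize the frame directly to $\cV_{>\bm{r}-1}\cap j_\ast S(\bV)$ so that the $L^2$ condition becomes plain holomorphy of the coefficients; the underlying exponent bookkeeping is identical.
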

	\begin{proof}
		By Lemma \ref{prop_L2ext_birational}, the proof can be reduced to the log smooth case, that is, $X$ is smooth, $E:=X\backslash X^o$ is a simple normal crossing divisor and ${\rm supp}(D)\subset E$. Denote $j:X^o:=X\backslash E\to X$ to be the inclusion. We are going to show that $$S_X(\bV,-\frac{1}{m}D)\otimes L=S_X(S(\bV)\otimes L,h_Qh_L)$$ as subsheaves of $\omega_X\otimes j_\ast(S(\bV))\otimes L$.
		Since the problem is local, we assume that $X=\Delta^n$ is the polydisc. Denote $E:=\{z_1\cdots z_l=0\}$ where  $E_i:=\{z_i=0\}$ for each $i=1,\dots,l$.  Let $\bV=(\cV,\nabla,\{\cV^{p,q}\},h_Q)$ be a polarized complex variation of Hodge structure on $X^o$. Let ${\bf 0}=(0,\dots,0)\in X$ and let $(\widetilde{v_1},\dots,\widetilde{v_N})$ be an $L^2$-adapted local frame of $R_X(\bV)$ at ${\bf 0}$ as in Proposition \ref{prop_adapted_frame}. Let $f_1,\dots,f_N\in (j_\ast\sO_{X^o})_{\bf 0}$ and let $e$ be the local frame of $L$ at $\bm{0}$. We are going to prove that
		$$\sum_{i=1}^Nf_i[\widetilde{v_i}dz_1\wedge\cdots\wedge dz_n\otimes e]_{\bf 0}\in S_X(S(\bV)\otimes L,h_Qh_L)_{\bf 0}$$ if and only if
		$$f_i\in\sO_X\big(-\sum_{j=1}^l\lfloor\frac{r_i}{m}-\alpha_{E_j}(\widetilde{v_i})\rfloor E_j\big)_{\bf 0}$$
		for every $i=1,\dots,N$. 
		
		Denote $ds^2_0=\sum_{i=1}^ndz_id\bar{z}_i$.
		Since $(\widetilde{v_1},\dots,\widetilde{v_N})$ is an $L^2$-adapted frame as in Proposition \ref{prop_adapted_frame}, the integral
		$$\int|\sum_{i=1}^Nf_i\widetilde{v_i}dz_1\wedge\cdots\wedge dz_n|^2|e|^2_{h_L}{\rm vol}_{ds^2_0}=\int|\sum_{i=1}^Nf_i\widetilde{v_i}|^2|e|^2_{h_L}{\rm vol}_{ds^2_0}$$ is finite near ${\bf 0}$ if and only if
		\begin{align}\label{align_l2_term}
			\int|f_i\widetilde{v_i}|^2|e|^2_{h_L}{\rm }{\rm vol}_{ds^2_0}\sim \int|f_i|^2{\rm }\prod_{j=1}^r|z_j|^{2\alpha_{E_j}(\widetilde{v_i})-\frac{2r_i}{m}}\lambda_i{\rm vol}_{ds^2_0}
		\end{align}
		is finite near ${\bf 0}$ for every $i=1,\dots,N$. Here $\lambda_i$ is a positive real function so that
		\begin{align}\label{align_keylem_lambda}
			1\lesssim \lambda_i\lesssim |z_1\cdots z_r|^{-\epsilon},\quad\forall\epsilon>0.
		\end{align}	
		Denote 
		$$v_j(f):=\min\{l|f_l\neq0\textrm{ in the Laurant expansion } f=\sum_{i\in\bZ}f_iz_j^i\}.$$
		By Lemma \ref{lem_integral}, the local integrability of (\ref{align_l2_term}) is equivalent to that
		\begin{align}
			v_j( f_i)+\alpha_{E_i}(\widetilde{v_j})-\frac{r_i}{m}>-1,\quad \forall j=1,\dots, l.
		\end{align}
		This is equivalent to 
		\begin{align}
			v_j( f_i)\geq\lfloor-\alpha_{E_i}(\widetilde{v_j})+\frac{r_i}{m}\rfloor,\quad \forall j=1,\dots, l.
		\end{align}
		As a consequence, $S_X(S(\bV)\otimes L,h_Qh_L)_{\bf 0}$ is generated by $$dz_1\wedge\cdots\wedge dz_n\otimes e\otimes{\rm exp}\left(\sum_{i=1}^n\log z_i(\lfloor-\alpha_{E_i}(\widetilde{v_j})+\frac{r_i}{m}\rfloor{\rm Id}+N_i)\right)\widetilde{v_j} ,\quad \forall j=1,\dots,N.$$
		These are exactly the generators of $\omega_{X}\otimes L\otimes(j_\ast(S(\bV))\cap\cV_{>\frac{\bm{r}}{m}-1})$ at ${\bf 0}$.
		The proof is finished.
	\end{proof}
	The proof of the following lemma is omitted.
	\begin{lem}\label{lem_integral}
		Let $f$ be a holomorphic function on $\Delta^\ast:=\{z\in\bC|0<|z|<1\}$ and $a\in\bR$. Then
		$$\int_{|z|<\frac{1}{2}}|f|^2|z|^{2a}dzd\bar{z}<\infty$$
		if and only if $v(f)+a>-1$. Here
		$$v(f):=\min\{l|f_l\neq0\textrm{ in the Laurant expansion } f=\sum_{i\in\bZ}f_iz^i\}.$$
	\end{lem}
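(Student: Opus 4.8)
The plan is to evaluate the integral directly in polar coordinates and reduce the question to elementary one-variable radial integrals by exploiting orthogonality of the angular modes. Writing $z = re^{\sqrt{-1}\theta}$, one has $|dz\,d\bar z| = 2r\,dr\,d\theta$, and the harmless constant plays no role in deciding finiteness. Since $f$ is holomorphic on $\Delta^\ast$, it has a Laurent expansion $f = \sum_{i\in\bZ} f_i z^i$ that converges locally uniformly on the punctured disc. The key step is to expand $|f|^2 = \sum_{i,j} f_i\overline{f_j}\, r^{i+j} e^{\sqrt{-1}(i-j)\theta}$ and integrate in $\theta$ first: the relation $\int_0^{2\pi} e^{\sqrt{-1}(i-j)\theta}\,d\theta = 2\pi\,\delta_{ij}$ annihilates the off-diagonal terms, leaving $\int_0^{2\pi} |f(re^{\sqrt{-1}\theta})|^2\,d\theta = 2\pi\sum_{i\in\bZ} |f_i|^2 r^{2i}$.

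Because every summand is nonnegative, Tonelli's theorem permits interchanging summation and radial integration, so that
$$\int_{|z|<\frac12} |f|^2|z|^{2a}\,dz\,d\bar z = 4\pi\sum_{i\in\bZ} |f_i|^2\int_0^{1/2} r^{2i+2a+1}\,dr,$$
with the convention that a summand is $+\infty$ whenever its radial integral diverges. Each radial integral $\int_0^{1/2} r^{2i+2a+1}\,dr$ is finite exactly when $2i+2a+1 > -1$, i.e. $i + a > -1$, and diverges otherwise (including the logarithmic borderline $i+a=-1$). Consequently a necessary condition for finiteness of the whole integral is $f_i = 0$ for every $i$ with $i + a \leq -1$, since a single such nonzero coefficient would contribute an infinite term to the nonnegative sum; as the least index with $f_i\neq 0$ is $v(f)$, this says precisely $v(f) + a > -1$.

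For the converse I would check that, once $v(f) + a > -1$ holds, the full series converges. Every nonzero term has $i \geq v(f)$, hence $i + a + 1 \geq v(f) + a + 1 > 0$, so the radial factor $1/\big(2(i+a+1)\big)$ is bounded by $1/\big(2(v(f)+a+1)\big)$ uniformly in $i$; combining this with $\sum_{i\in\bZ} |f_i|^2 (\tfrac12)^{2i} = \frac{1}{2\pi}\int_0^{2\pi} |f(\tfrac12 e^{\sqrt{-1}\theta})|^2\,d\theta < \infty$, which is finite because $f$ is bounded on the circle $|z| = \tfrac12$, shows the series is dominated by a convergent one. This yields both implications. The argument is entirely routine; the only points needing (minor) care are the interchange of sum and integral and the automatic convergence of the tail, and both are settled cleanly by nonnegativity together with the holomorphy of $f$ on the closed subdisc of radius $\tfrac12$. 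These are the nearest things to an obstacle, but there is no genuine difficulty here.
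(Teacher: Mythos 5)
Your proof is correct, and the paper explicitly omits its own proof of this lemma, so there is nothing to compare against; your polar-coordinate/Parseval argument (orthogonality of angular modes, Tonelli to swap sum and radial integral, and the uniform bound $i+a+1\geq v(f)+a+1>0$ for the converse) is exactly the standard computation the authors are taking for granted. All the delicate points --- the interchange justified by nonnegativity, the divergence of any single term with $i+a\leq -1$ forcing necessity, and convergence of $\sum_i |f_i|^2(\tfrac12)^{2i}$ from continuity of $f$ on the circle $|z|=\tfrac12$ --- are handled correctly.
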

	\subsection{Koll\'ar package}
	In this section we prove the main theorem (Theorem \ref{thm_main_CVHS}) of the present paper.
	Let $X$ be a complex space and $X^o\subset X_{\rm reg}$ a dense Zariski open subset. Let $\bV:=(\cV,\nabla,\{\cV^{p,q}\},Q)$ be a polarized complex variation of Hodge structure of weight $k$ on $X^o$. Let
	$$\nabla=\overline{\theta}+\partial+\dbar+\theta$$
	be the decomposition according to (\ref{align_Griffiths transversality}).
	For the reason of degrees, $S(\bV)$ is a holomorphic subbundle of $\cV$ and $\overline{\theta}(S(\bV))=0$. \begin{lem}\label{lem_SV_tame}
		$(S(\bV),h_Q)$ is a Nakano semipositive vector bundle which is tame on $X$.
	\end{lem}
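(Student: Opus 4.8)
The plan is to verify the two assertions separately: Nakano semipositivity of the Hodge metric on the top Hodge bundle $S(\bV)$, and the tameness condition of Definition \ref{defn_tame_hermitian_bundle}.

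For Nakano semipositivity I would exploit the decomposition $\nabla=\overline{\theta}+\partial+\dbar+\theta$. The operator $D:=\partial+\dbar$ preserves the Hodge decomposition $\bigoplus_{p+q=k}\cV^{p,q}$ and, by virtue of the polarization $Q$, is the unitary connection for $h_Q$, while $\theta$ and $\overline{\theta}$ are mutually $h_Q$-adjoint (with $\theta$ lowering and $\overline{\theta}$ raising the Hodge type by one). Expanding the flatness relation $\nabla^2=0$ and collecting the type-$(1,1)$ terms that preserve the Hodge grading yields the Hitchin-type identity
$$F_D=-(\theta\wedge\overline{\theta}+\overline{\theta}\wedge\theta),$$
where $F_D$ is the $(1,1)$-curvature of $D$, block-diagonal with respect to the Hodge decomposition. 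Now I would observe that $D$ preserves $S(\bV)=\cV^{p_{\rm max},k-p_{\rm max}}$ and that $D|_{S(\bV)}$ is in fact the Chern connection of $(S(\bV),h_Q)$: since $\overline{\theta}(S(\bV))=0$, a local section $s$ of $S(\bV)$ is holomorphic for the ambient flat structure $\dbar_{\cV}=\dbar+\overline{\theta}$ if and only if $\dbar s=0$, so the $(0,1)$-part of the unitary connection $D|_{S(\bV)}$ agrees with the holomorphic structure of the subbundle and no second fundamental form intervenes. Restricting the identity to $S(\bV)$ and using $\overline{\theta}|_{S(\bV)}=0$, only $\overline{\theta}\wedge\theta$ survives, so the Chern curvature is $-(\overline{\theta}\wedge\theta)|_{S(\bV)}$. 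Writing $\theta=\sum_a\theta_a\,dz_a$ and $\overline{\theta}=\sum_b\theta_b^\ast\,d\bar z_b$, the associated Nakano quadratic form evaluated on $\sum_a\partial_{z_a}\otimes s_a$ equals $\sum_{a,b}\langle\theta_a s_a,\theta_b s_b\rangle_{h_Q}=\big\|\sum_a\theta_a s_a\big\|_{h_Q}^2\ge 0$, which is exactly Nakano semipositivity.

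For tameness the statement is local, so I would fix $x\in X$, a small neighborhood $U$, and a proper bimeromorphic $\pi:\widetilde U\to U$, biholomorphic over $X^o\cap U$, making the total boundary a simple normal crossing divisor on $\widetilde U$. As the ambient bundle I take $Q:=R_{\widetilde U}(\pi^{o\ast}\bV)=\cV_{>\bm{-1}}\cap j_\ast S(\pi^{o\ast}\bV)$, which is locally free by the nilpotent orbit theorem and contains $\pi^\ast S(\bV)$, equipped with an arbitrary smooth hermitian metric $h_0$ (the role of $h_Q$ in Definition \ref{defn_tame_hermitian_bundle}); for the comparison metric $h'_Q$ I take the Hodge metric, which over the open part restricts to $\pi^\ast h_Q$ on $\pi^\ast S(\bV)$, so condition (1) and the relation $h'_Q|_{\pi^\ast E}\sim\pi^\ast h$ hold at once. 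The remaining inequality $(\sum_i\|\pi^\ast f_i\|^2)^c h_0\lesssim h'_Q$ is where I would invoke Proposition \ref{prop_adapted_frame}: in the $L^2$-adapted frame $(\widetilde v_1,\dots,\widetilde v_N)$ one has $|\widetilde v_j|^2\sim\lambda_j\prod_i|z_i|^{2\alpha_{E_i}(\widetilde v_j)}$ with $\alpha_{E_i}(\widetilde v_j)\in(-1,0]$ and $1\lesssim\lambda_j$. Because every exponent satisfies $\alpha_{E_i}(\widetilde v_j)\le 0$, each diagonal entry of the Hodge metric is bounded below by a positive constant, and $L^2$-adaptedness makes the metric mutually comparable with its diagonal part, whence $h'_Q\gtrsim h_0$. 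Since $\sum_i\|\pi^\ast f_i\|^2\sim|z_1\cdots z_r|^2$ vanishes on the boundary, the inequality holds with $c=0$, and the existence of the required $c\in\bR$ is clear.

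The main obstacle is the curvature computation of the first part: one must correctly single out $D=\partial+\dbar$ as the unitary connection, verify that its restriction to $S(\bV)$ is the Chern connection of the subbundle, and then track signs and composition orders carefully so as to land on $+\big\|\sum_a\theta_a s_a\big\|^2$ rather than its negative. The tameness part is, by contrast, essentially bookkeeping: the genuinely hard analytic input — the asymptotics of the Hodge metric along a degeneration — has already been isolated in Theorem \ref{thm_Hodge_metric_asymptotic} and Proposition \ref{prop_adapted_frame}, and the constraint $\alpha_{E_i}(\widetilde v_j)\in(-1,0]$ arising from the prolongation index range is precisely what supplies the required lower bound.
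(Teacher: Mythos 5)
Your proof follows the paper's own argument in both halves. The Nakano semipositivity part is the same computation the paper gives: the decomposition $\nabla=\overline{\theta}+\partial+\dbar+\theta$, the vanishing $\overline{\theta}(S(\bV))=0$, and the flatness/Griffiths identity yielding $\sqrt{-1}\Theta_{h_Q}(S(\bV))=-\sqrt{-1}\,\overline{\theta}\wedge\theta\geq_{\rm Nak}0$; your extra care about why no second fundamental form enters is correct and harmless. The tameness part also matches in strategy (pass to a log resolution, take a Deligne-type extension as the ambient bundle $Q$, and bound the Hodge metric from below via the degeneration asymptotics), but one step of your justification is not valid as stated: you argue that $c=0$ works because the diagonal entries $|\widetilde{v_j}|^2_{h_Q}$ are bounded below and ``$L^2$-adaptedness makes the metric mutually comparable with its diagonal part.'' $L^2$-adaptedness is a statement about local square-integrability of sums $\sum f_j\widetilde{v_j}$, not a pointwise quasi-isometry between $h_Q$ and the diagonal metric; since the frame is produced from an orthogonal flat basis by the non-unitary operators ${\rm exp}\bigl(\sum\log z_i(\alpha_i{\rm Id}+N_i)\bigr)$, the Gram matrix has off-diagonal terms and its smallest eigenvalue can be much smaller than the smallest diagonal entry. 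The paper sidesteps this by taking $Q=\cV_{>\bm{-1}}$ with an arbitrary smooth metric $h_0$ and extracting from Theorem \ref{thm_Hodge_metric_asymptotic} the cruder estimate $|z_1\cdots z_k|\,|s|_{h_0}\lesssim|s|_{h_Q}$ for every holomorphic local section $s$, i.e.\ tameness with $c=1$; since Definition \ref{defn_tame_hermitian_bundle} allows any $c\in\bR$, this is all that is needed, and substituting it for your $c=0$ claim repairs the argument.
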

	\begin{proof}
		To see that $(S(\bV),h_Q)$ is Nakano semipositive, we take the decomposition 
		$$\nabla=\overline{\theta}+\partial+\dbar+\theta$$
		according to (\ref{align_Griffiths transversality}).
		Since $\overline{\theta}(S(\bV))=0$, it follows from Griffiths' curvature formula
		$$\Theta_h(S(\bV))+\theta\wedge\overline{\theta}+\overline{\theta}\wedge\theta=0$$
		that
		$$\sqrt{-1}\Theta_h(S(\bV))=-\sqrt{-1}\overline{\theta}\wedge\theta\geq_{\rm Nak} 0.$$
		To prove the tameness we use Deligne's extension. Since the problem is local, we assume that there is a desingularization  $\pi:\widetilde{X}\to X$ such that $\pi$ is biholomorphic over $X^o$ and $D:=\pi^{-1}(X\backslash X^o)$ is a simple normal crossing divisor. By abuse of notations we identify $X^o$ and $\pi^{-1}(X^o)$. There is an inclusion $S(\bV)\subset\cV_{>\bm{-1}}|_{X^o}$. Let $z_1,\dots,z_n$ be holomorphic local coordinates such that $D_i=\{z_i=0\}$, $i=1,\dots,k$ and $D=\{z_1\cdots z_k=0\}$. By Theorem \ref{thm_Hodge_metric_asymptotic}, one has the norm estimate 
		\begin{align}\label{align_normest_tame}
			|z_1\cdots z_k|^{}|s|_{h_0}\lesssim |s|_h
		\end{align}
		for any  holomorphic local  section $s$ of $\cV_{>\bm{-1}}$. Here $h_0$ is an arbitrary (smooth) hermitian metric on $\cV_{>\bm{-1}}$.	
        This shows that $(S(\bV),h_Q)$ is tame.
	\end{proof}
	\begin{thm}
		Let $X$ be a complex space and $X^o\subset X_{\rm reg}$ a dense Zariski open subset. Let $\bV:=(\cV,\nabla,\{\cV^{p,q}\},Q)$ be a polarized complex variation of Hodge structure of weight $k$ on $X^o$.
		Let $L$ be a line bundle such that some multiple $mL=A+D$ where $A$ is a semipositive line bundle and $D$ is an effective Cartier divisor on $X$. Let $F$ be an arbitrary Nakano-semipositive vector bundle on $X$. Then $S_{X}(\bV,-\frac{1}{m}D)\otimes F\otimes L$ satisfies Koll\'ar's package with respect to any locally K\"ahler proper morphism $f:X\to Y$ such that $Y$ is irreducible and each irreducible component of $X$ is mapped onto $Y$.
	\end{thm}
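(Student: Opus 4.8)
The plan is to reduce the statement to the abstract Koll\'ar package (Theorem \ref{thm_abstract_Kollar_package}) by realizing $S_X(\bV,-\frac{1}{m}D)\otimes F\otimes L$ as the $L^2$-sheaf $S_X(E,h)$ attached to a single tame, Nakano-semipositive Hermitian bundle $(E,h)$ on $X^o$. Concretely, equip $A$ with a smooth semipositive metric $h_A$, let $h_D$ be the canonical singular metric on $\sO_X(D)$, set $h_L:=(h_A h_D)^{1/m}$, choose a smooth Nakano-semipositive metric $h_F$ on $F$, and let $h_Q$ be the Hodge metric on $S(\bV)$. By Theorem \ref{thm_L2_interpretation} one has $S_X(\bV,-\frac{1}{m}D)\otimes L\simeq S_X(S(\bV)\otimes L,h_Q h_L)$; tensoring with $(F,h_F)$ and applying Lemma \ref{lem_Kernel} gives
$$S_X(\bV,-\tfrac{1}{m}D)\otimes F\otimes L\simeq S_X\big(S(\bV)\otimes F\otimes L,\;h_Q h_F h_L\big).$$
It then suffices to verify that $E:=S(\bV)\otimes F\otimes L$, endowed with $h:=h_Q h_F h_L$, satisfies the two hypotheses of Theorem \ref{thm_abstract_Kollar_package}: Nakano semipositivity on a dense smooth locus, and tameness on $X$.

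For Nakano semipositivity I would restrict to the dense Zariski open set $X^o\setminus{\rm supp}(D)$, on which $h_Q$, $h_F$ and $h_L$ are all smooth. There $(S(\bV),h_Q)$ is Nakano semipositive by Lemma \ref{lem_SV_tame}, $(F,h_F)$ is Nakano semipositive by hypothesis, and $(L,h_L)$ is a line bundle whose curvature equals $\frac{1}{m}\Theta_{h_A}\geq 0$, since the contribution of $h_D$ vanishes away from ${\rm supp}(D)$; being a semipositive line bundle it is Nakano semipositive. Because the curvature of a tensor product is $\Theta_{E'}\otimes{\rm Id}+{\rm Id}\otimes\Theta_{E''}$ and each summand contributes nonnegatively to the Nakano quadratic form on $T_X\otimes(E'\otimes E'')$, the tensor product of Nakano-semipositive bundles is again Nakano semipositive; iterating, $(E,h)$ is Nakano semipositive on $X^o\setminus{\rm supp}(D)$.

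For tameness, $(S(\bV),h_Q)$ is tame on $X$ by Lemma \ref{lem_SV_tame}. Tensoring with $(F,h_F)$ preserves tameness, its metric being smooth and bounded on all of $X$. Tensoring with $(L,h_L)$ also preserves tameness: on a common log-resolution on which $E$ and $D$ both become normal crossing, the only new factor contributed by $h_L$ near the boundary is $|s|^{-2/m}$, which is comparable to a power $\prod_i|z_i|^{-2r_i/m}$ of the local defining functions and is therefore dominated by a factor $(\sum_i\|\pi^\ast f_i\|^2)^c$, so it is absorbed by the flexibility of the tameness condition (\ref{align_tame}). Hence $(E,h)$ is tame on $X$, and Theorem \ref{thm_abstract_Kollar_package} yields Koll\'ar's package for $S_X(E,h)\simeq S_X(\bV,-\frac{1}{m}D)\otimes F\otimes L$, completing the proof. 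I expect the genuine work to lie in this last, tameness, step: one must check that multiplying the already tame Hodge metric $h_Q$---whose boundary growth is governed by Theorem \ref{thm_Hodge_metric_asymptotic} and the nilpotent orbit theorem---by the genuinely singular weight coming from $h_D^{1/m}$ preserves the comparison (\ref{align_tame}); by contrast the Nakano-positivity step reduces to the elementary fact that tensor products respect Nakano semipositivity.
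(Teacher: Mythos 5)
Your proposal is correct and follows essentially the same route as the paper: realize $S_X(\bV,-\frac{1}{m}D)\otimes F\otimes L$ as $S_X(S(\bV)\otimes L\otimes F|_U,h_Qh_Lh_F|_U)$ with $U=X^o\setminus{\rm supp}(D)$ via Theorem \ref{thm_L2_interpretation} and Lemma \ref{lem_Kernel}, check Nakano semipositivity and tameness of the product metric using Lemma \ref{lem_SV_tame} and the choice $h_L=(h_Ah_D)^{1/m}$, and conclude by Theorem \ref{thm_abstract_Kollar_package}. Your elaborations on why tensor products preserve Nakano semipositivity and why the singular factor $h_D^{1/m}$ is absorbed by the tameness condition (\ref{align_tame}) are points the paper leaves implicit, but the argument is the same.
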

	\begin{proof}
		Let $h_A$ be a hermitian metric on $A$ with semipositive curvature and $h_D$ the singular hermitian metric on $\sO_X(D)$ determined by the effective divisor $D$. Denote $h_L:=(h_Ah_D)^{\frac{1}{m}}$. Then
		$$\sqrt{-1}\Theta_{h_L}(L)|_{X\backslash D}=\frac{\sqrt{-1}}{m}\Theta_{h_A}(A)|_{X\backslash D}\geq0.$$
		Hence $(L|_{U},h_L|_U)$ has semipositive curvature and is tame on $X$. Therefore by Lemma \ref{lem_SV_tame} $(S(\bV)\otimes L\otimes F|_U,h_Qh_Lh_F|_U)$ has semipositive curvature on $U=X^o\backslash {\rm supp}(D)$ and is tame on $X$. By Lemma \ref{lem_Kernel}, Theorem \ref{thm_abstract_Kollar_package} and Theorem \ref{thm_L2_interpretation} we obtain that
		$S_{X}(\bV,-\frac{1}{m}D)\otimes F\otimes L\simeq S_X(S(\bV)\otimes L\otimes F|_U,h_Qh_Lh_F|_U)$ satisfies Koll\'ar's package with respect to $f:X\to Y$.
	\end{proof}
	\bibliographystyle{plain}
	\bibliography{CGM_Kollar}
	
\end{document}